\newcommand{\bbR}{\mathbb{R}}
\newcommand{\Rn}{\mathbb{R}^n}
\newcommand{\Ldel}{L^{\delta,\beta}}
\newcommand{\Ldelinf}{L^{\delta,-\infty}}
\newcommand{\cdel}{c^{\delta,\beta}}
\newcommand{\mdel}{m^{\delta,\beta}}
\newcommand{\mdelone}{m^{1,\beta}}
\newcommand{\mdelbp}{m^{\delta,\beta'}}
\newcommand{\mdelinf}{m^{\delta,-\infty}}
\newcommand{\udel}{u^{\delta,\beta}}
\newcommand{\hatudel}{\hat{u}^{\delta,\beta}}
\newcommand{\T}{\mathbf{T}}
\DeclareMathOperator{\Res}{Res}
\newtheorem{theorem}{Theorem}
\newtheorem{lemma}{Lemma\textbf{}}
\newtheorem{cor}{Corollary}
\theoremstyle{remark}
\newtheorem{remark}{Remark}
\title{Fourier multipliers for nonlocal  Laplace operators}
\author{Bacim Alali and Nathan Albin\\
\\
{\footnotesize Department of Mathematics, Kansas State University, Manhattan, KS}}
\begin{document}

\maketitle

\begin{abstract}
Fourier multiplier analysis is developed for nonlocal  peridynamic-type Laplace operators, which are defined for scalar fields in $\Rn$. The Fourier multipliers are given through an integral representation. 
We show that the integral representation of the Fourier multipliers is recognized explicitly through a  unified and general formula  in terms of the hypergeometric function $_2F_3$ in any spatial dimension $n$.
Asymptotic analysis of $_2F_3$ is utilized to identify the asymptotic behavior of the Fourier multipliers $m(\nu)$ as $\|\nu\|\rightarrow \infty$. We show that the multipliers are bounded when the peridynamic Laplacian has an integrable kernel, and diverge  when the kernel is singular.  The bounds and decay rates are presented explicitly in terms of the dimension $n$, the integral kernel, and the peridynamic Laplacian nonlocality. The asymptotic analysis is applied in the periodic setting to prove a regularity result for the  peridynamic Poisson equation and, moreover, show that its solution converges to the solution of the classical Poisson equation.
\end{abstract}

{\it Keywords}:
Fourier multipliers, eigenvalues, nonlocal Laplacian, nonlocal equations, peridynamics.

\section{Introduction}
In this work, we study the Fourier multipliers of  nonlocal peridynamic-type Laplace operators and their asymptotic behavior. In  nonlocal vector calculus,  introduced in \cite{nonlocal_calc_2013},  a nonlocal Laplace operator can be defined by
\begin{equation}
\label{eq:general_nonlocal_laplacian}
  L_\gamma u(x) = \int \gamma(x,y) (u(y)-u(x))\;dy,
\end{equation}
 for a symmetric kernel  $\gamma(x,y)=\gamma(y,x)$, with $x,y\in \bbR^n$. For second-rank tensor-valued kernels $\gamma$ and  vector fields $u$, the operator $L_\gamma$ corresponds to a linear peridynamic operator \cite{Silling2000,silling2010linearized}. Various mathematical and engineering studies have addressed linear peridynamics including \cite{silling2016book,du2013analysis,emmrich2007well, ALperid1,mengesha_du_heterog_peridynamics_2014,Alali_Gunzburger0,madenci2014peridynamic}.  
 For scalar-valued kernels $\gamma$ and scalar fields $u$, the operators in \eqref{eq:general_nonlocal_laplacian} have been studied in the context of nonlocal diffusion, digital image correlation, and nonlocal wave phenomena among other applications, see for example \cite{bobaru2010peridynamicheat,burch2011classical,lehoucq2015novel,seleson2013interface}. Several mathematical and numerical studies have focused on nonlocal Laplace operators including  \cite{radu2017nonlocal,du2012analysis,aksoylu2011variational}.

 In this work, we focus on radially symmetric kernels with compact support of the form
 \begin{equation*}\label{eq:kernel}
 \gamma(x,y)=\cdel  \frac{1}{\|y-x\|^\beta}\; \chi_{B_\delta(x)}(y),
 \end{equation*}
where $\chi_{B_\delta(x)}$ is the indicator function of the ball of radius $\delta>0$ centered at $x$, and  the exponent satisfies $\beta<n+2$.
In this case, the nonlocal Laplacian  $\Ldel$, parametrized by the nonlocality parameter $\delta$ and the integral kernel exponent $\beta$, can be written as
\begin{equation}\label{eq:nonlocal_laplacian}
  \Ldel u(x) = \cdel\int_{B_\delta(x)}\frac{u(y)-u(x)}{\|y-x\|^\beta}\;dy
  = \cdel\int_{B_\delta(0)}\frac{u(x+z)-u(x)}{\|z\|^\beta}\;dz,
\end{equation}
where the scaling constant $\cdel$ is 
chosen in such a way that for the function $u(z):=\|z\|^2$, $\Ldel u = 2 n = \Delta u$, or equivalently, it is defined by
\begin{eqnarray}
\label{eq:cdel-integral}
\cdel &:=& \left(\frac{1}{2 n}\int_{B_\delta(0)}\frac{\|z\|^2}{\|z\|^\beta}\;dz\right)^{-1},\\
\label{eq:cdel-explicit}
&=& \frac{2(n+2-\beta)\Gamma\left(\frac{n}{2}+1\right)}
    {\pi^{n/2}\delta^{n+2-\beta}}.
\end{eqnarray}

The Fourier multipliers $\mdel$ of the operator $\Ldel$ are defined through  Fourier transform by
\begin{equation}
\label{eq:multiplier-def}
   \widehat{\Ldel u} = \mdel \; \hat{u}.
\end{equation}
In the special case of periodic domains, the multipliers are in fact the eigenvalues  of $\Ldel$ (see Section \ref{sec:eigenvalues}). Eigenvalues of nonlocal Laplace operators have been recently studied in \cite{du2016asymptotically,du2017fast}.
The work \cite{du2016asymptotically} focused on nonlocal eigenvalues and spectral approximations for a 1-D nonlocal Allen--Cahn (NAC) equation with periodic boundary conditions, and showed that these numerical methods are asymptotically compatible in the sense that they provide convergent approximations
to both nonlocal and local NAC models. The work \cite{du2016asymptotically} provides  numerical examples  corresponding to specific integral kernels for which the  eigenvalues are computed explicitly. A more general method is provided in \cite{du2017fast} for the accurate computations of the  eigenvalues in 1, 2, and 3 dimensions for a class of radially symmetric kernels. The approach in \cite{du2017fast} is based on reformulating the integral representations of the  eigenvalues  as series expansions and as solutions to ODE models, and then developing a hybrid algorithm to compute the eigenvalues that utilizes a truncated series expansion and higher order Runge-Kutta ODE solvers.

In this work, we provide a general and unified approach for the  the analysis, computation, and asymptotics of the Fourier multipliers, in particular the  eigenvalues, of the peridynamic Laplace operator $\Ldel$ in any spatial dimension. 
The main contributions of this work are summarized by the following.
\begin{itemize}
\item In Section \ref{sec:multipliers}, the Fourier multipliers of  $\Ldel$  are identified  through an integral representation
\begin{equation*}\label{eq:multiplier-cosine0}
	\mdel(\nu) = \cdel\int_{B_\delta(0)}\frac{\cos(\nu\cdot z)-1}{\|z\|^\beta}\;dz,
\end{equation*}
for $\beta<n+2$.
\item Theorem \ref{thm:multipliers-2F3} (Section~\ref{sec:hypergeometric}) provides a reformulation  for the multipliers, and in particular for the  eigenvalues of $\Ldel$, in terms of the  hypergeometric function $_2F_3$ for any spatial dimension $n$ and any $\beta<n+2$ 
\begin{equation}
 \label{eq:multiplier-general0}
	\mdel(\nu) = -\|\nu\|^2\,_2F_3\left(1,\frac{n+2-\beta}{2};2,\frac{n+2}{2},\frac{n+4-\beta}{2};-\frac{1}{4}\|\nu\|^2\delta^2\right).
\end{equation}
\item  In Section~\ref{sec:generalized}, we use the fact that formula~\eqref{eq:multiplier-general0} makes sense for all $\beta\in\mathbb{R}\setminus\{n+4,n+6,n+8,\ldots\}$ to extend the definition of the multipliers $\mdel(\nu)$ and consequently the definition of the operator $\Ldel$ acting on  the class of  Schwartz functions through the inverse Fourier transform
\begin{equation*}
\Ldel u(x) = \frac{1}{(2\pi)^n}\int_{\mathbb{R}^n}\mdel(\nu)\hat{u}(\nu)e^{i\nu\cdot x}\;d\nu,
\end{equation*}
to the case  when $\beta\ge n+2$, with $\beta\neq n+4,n+6,n+8,\ldots$. 
\item  In Section~\ref{sec:convergence-laplacian}, we consider two types of limiting behavior for the operator $\Ldel$ and its Fourier multipliers as $\beta\rightarrow n+2$  and as $\delta\rightarrow 0$. 
\begin{itemize}
\item Corollary~\ref{cor:delta-beta} presents convergence of the Fourier multipliers   to the Fourier multipliers of the Laplacian
\[
\mdel(\nu)\rightarrow -\|\nu\|^2, \quad \mbox { for } \nu\in\Rn,
\] 
in both   limits  $\beta\rightarrow n+2$ (for  $\delta>0$) and  $\delta\rightarrow 0^+$ (for  $\beta\in\mathbb{R}\setminus\{n+4,n+6,n+8,\ldots\}$).

\item Theorem~\ref{thm:delta-beta} provides a result on the convergence of the peridynamic Laplacian  to the Laplacian 
\[
\Ldel u(x)\rightarrow \Delta u(x), \quad \mbox { for } u\in C^3(\Rn) \mbox { and  } x\in\Rn,
\] 
in both   limits  $\beta\rightarrow n+2^-$ (for  $\delta>0$) and  $\delta\rightarrow 0^+$ (for  $\beta<n+2$).

\end{itemize}
\item In addition, we consider the case $\beta\rightarrow -\infty$ in Section~\ref{sec:generalized}. 
\begin{itemize}
    \item Theorem~\ref{thm:Ldel-infty} shows that the limiting operator $\Ldelinf$ can be realized as an integral operator, with kernel supported in the $(n-1)$-sphere, which acts on continuous functions $u$ and is given by
\begin{equation*}
    \Ldelinf u(x):=\lim_{\beta\to-\infty}\Ldel u(x) = 
    \frac{2\Gamma\left(\frac{n}{2}+1\right)}{\pi^{n/2}\delta^2}
    \int_{S^{n-1}}\left(u(x+\delta w)-u(x)\right)\textbf{}\;d_{S^{n-1}}V.
\end{equation*}

    \item Theorem~\ref{thm:big-beta-mult} provides explicit formulas for the limiting Fourier multipliers
    \begin{eqnarray*}
   \lim_{\beta\to-\infty}\mdel(\nu) &=&
    \frac{4\Gamma\left(\frac{n}{2}+1\right)}{\delta^2}\left(
    \frac{J_{(n-2)/2}(\|\nu\|\delta)}{\left(\frac{1}{2}\|\nu\|\delta\right)^{(n-2)/2}}
    -\frac{1}{\Gamma\left(\frac{n}{2}\right)}
  \right),\\
  &=&
    -\|\nu\|^2\;\;{}_1F_2\left(1;2,\frac{n+2}{2};-\frac{1}{4}\|\nu\|^2\delta^2\right).
\end{eqnarray*}

\end{itemize}

\item In Section \ref{sec:asymptotics}, we utilize the general formula~\eqref{eq:multiplier-general0} and the asymptotic behavior of the hypergeometric function $_2F_3$ to study the asymptotic behavior of the multipliers $\mdel(\nu)$ as $\|\nu\|\rightarrow \infty$.
Theorem~\ref{thm:asymptotics} characterizes  the asymptotic behavior of the multipliers, and in particular that of the  eigenvalues of $\Ldel$, by different cases for any spatial dimension $n$ and  any kernel exponent $\beta\in\mathbb{R}\setminus\{n+4,n+6,n+8,\ldots\}$.
\begin{itemize}
\item[]
\item {\bf Case 1:} $-\infty<\beta<n$. The multipliers are bounded and satisfy
\[
\mdel(\nu) \sim 
-\frac{2n(n+2-\beta)}{\delta^2(n-\beta)}.
\]

\item {\bf Case 2:} $\beta=n$. The multipliers are unbounded and satisfy
\[
\mdel(\nu) \sim -\frac{2n}{\delta^2}\left(
2\log\|\nu\|+
\log\left(\frac{\delta^2}{4}\right)+\gamma-\psi(\frac{n}{2})\right),
\]
where $\gamma$ is Euler's constant and $\psi$ is the digamma function.
\item {\bf Case 3:} $\beta > n$ with $\beta\neq n+4,n+6,n+8,\ldots$. The multipliers are  unbounded and  satisfy
\[
\mdel(\nu) \sim 
 2\left(\frac{2}{\delta}\right)^{n+2-\beta}
\frac{\Gamma\left(\frac{n+4-\beta}{2}\right)\Gamma\left(\frac{n+2}{2}\right)}{(n-\beta)\Gamma\left(\frac{\beta}{2}\right)}\|\nu\|^{\beta-n}.
\]
In particular, when $\beta=n+2$ the multipliers are explicitly given by $\mdel(\nu)=-\|\nu\|^2$, which is consistent with Corollary~\ref{cor:delta-beta}. 
\item {\bf Case 4:} $\beta=-\infty$.  Theorem~\ref{thm:big-beta-asymptotics}
provides the asymptotic behavior of the limiting multipliers $\mdelinf(\nu):=\lim_{\beta\rightarrow -\infty} \mdel(\nu)$, which are shown to satisfy
\begin{equation*}
\begin{split}
\mdelinf(\nu) =
    \left(\frac{2}{\delta}\right)^{(n+3)/2}
    \frac{\Gamma\left(\frac{n}{2}+1\right)}{\pi^{1/2}}
    \cos\left(
\|\nu\|\delta-\frac{n-1}{4}\pi
\right)\|\nu\|^{-(n-1)/2}\textbf{}
    \\-\frac{2n}{\delta^2}
   + O(\|\nu\|^{-(n+2)/2}).
\end{split}
\end{equation*}
\end{itemize}
\item Periodic analysis for the peridynamic Laplacian is presented in Section \ref{sec:periodic}.
\begin{itemize}
\item Section \ref{sec:eigenvalues} shows that, for  periodic domains,~\eqref{eq:multiplier-general} provides a representation for the eigenvalues of $\Ldel$ and Theorem~\ref{thm:asymptotics} describes their asymptotic behavior. 
\item Theorem~\ref{thm:poisson} provides a regularity result for the periodic nonlocal Poisson equation in any spatial dimension.
\item Theorems~\ref{thm:convergence_solutions} and \ref{thm:convergence_solutions_beta} provide  convergence results for the solution of the nonlocal Poisson equation to the solution of the local  Poisson equation for  $\delta\rightarrow 0^+$ and $\beta\rightarrow (n+2)^-$, respectively.
\end{itemize} 
\end{itemize}

\section{Fourier multipliers}\label{sec:multipliers}

In this section, we derive formulas for the Fourier multipliers of $\Ldel$ utilizing the $_2F_3$ generalized hypergeometric function.  To begin, we express $u(x)$ through its Fourier transform as
\begin{equation*}
	u(x) = \frac{1}{(2\pi)^n}\int_{\mathbb{R}^n}\hat{u}(\nu)e^{i\nu\cdot x}\;d\nu.
\end{equation*}
Since the definition of $\Ldel$ can be extended to the space of tempered distributions through the multipliers derived below, it is sufficient  to assume that $u$ is a Schwartz function.  Applying $\Ldel$ then shows that
\begin{equation*}
\begin{split}
	\Ldel u(x) &=\cdel\int_{B_\delta(0)}\frac{u(x+z)-u(x)}{\|z\|^\beta}\;dz\\
    &= \frac{1}{(2\pi)^n}\int_{\mathbb{R}^n}\left[
    \cdel\int_{B_\delta(0)}\frac{e^{i\nu\cdot z}-1}{\|z\|^\beta}\;dz
    \right]\hat{u}(\nu)e^{i\nu\cdot x}\;d\nu,
\end{split}
\end{equation*}
providing the representation
\begin{equation}\label{eq:multiplier-orig}
	\Ldel u(x) = \frac{1}{(2\pi)^n}\int_{\mathbb{R}^n}\mdel(\nu)\hat{u}(\nu)e^{i\nu\cdot x}\;d\nu,
	\quad\text{where}\quad 
	\mdel(\nu) = \cdel\int_{B_\delta(0)}\frac{e^{i\nu\cdot z}-1}{\|z\|^\beta}\;dz
\end{equation}
are the Fourier multipliers of the operator $\Ldel$.
Since 
\begin{equation*}
	\left|e^{i\nu\cdot z}-1\right| =
    \sqrt{2}\sqrt{1-\cos(\nu\cdot z)} \sim \|\nu\|\|z\|,
\end{equation*}
the integrand in~\eqref{eq:multiplier-orig} is integrable as long as $\beta<n+1$.
The imaginary part of~\eqref{eq:multiplier-orig} vanishes due to symmetry, so an alternative form is
\begin{equation}\label{eq:multiplier-cosine}
	\Ldel u(x) = \frac{1}{(2\pi)^n}\int_{\mathbb{R}^n}\mdel(\nu)\hat{u}(\nu)e^{i\nu\cdot x}\;d\nu,
	\quad\text{where}\quad 
	\mdel(\nu) = \cdel\int_{B_\delta(0)}\frac{\cos(\nu\cdot z)-1}{\|z\|^\beta}\;dz.
\end{equation}
Note that the second integral in (\ref{eq:multiplier-cosine}) is finite for $\beta<n+2$.  Thus, we may consider  the formula (\ref{eq:multiplier-cosine}) as the definition for $\Ldel$ with $\beta<n+2$.
\begin{remark}
Due to symmetry, the nonlocal Laplacian  in (\ref{eq:nonlocal_laplacian}) can also be written as
\begin{equation}\label{eq:Ldel_symmetric}
 \Ldel u(x) = \frac{\cdel}{2} \int_{B_\delta(0)}\frac{u(x+z)+u(x-z) - 2 u(x)}{\|z\|^\beta}\;dz,
\end{equation}
which is well-defined when $\beta<n+2$ for sufficiently differentiable functions $u$  (see Theorem~\ref{thm:delta-beta}).
\end{remark}

\subsection{Hypergeometric series representation}\label{sec:hypergeometric}

The following theorem provides a useful representation of $\mdel(\nu)$ through the generalized hypergeometric function $_2F_3$, defined as (see, e.g.,~\cite[Eq.~(16.2.1)]{NIST:DLMF})
\begin{equation*}
	_2F_3(a_1,a_2;b_1,b_2,b_3;z) :=
	\sum_{k=0}^\infty \frac{(a_1)_k(a_2)_k}{(b_1)_k(b_2)_k(b_3)_k}
	\frac{z^k}{k!},
\end{equation*}
where $(a)_k$ is Pochhammer symbol (the rising factorial).

\begin{theorem}\label{thm:multipliers-2F3}
Let $n\ge 1$, $\delta>0$ and $\beta<n+2$.  The Fourier multipliers $\mdel(\nu)$ in~\eqref{eq:multiplier-cosine} can be written in the form
\begin{equation}\label{eq:multiplier-general}
	m(\nu)=\mdel(\nu) = -\|\nu\|^2\,_2F_3\left(1,\frac{n+2-\beta}{2};2,\frac{n+2}{2},\frac{n+4-\beta}{2};-\frac{1}{4}\|\nu\|^2\delta^2\right).
\end{equation}
\end{theorem}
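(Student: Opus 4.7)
The plan is to compute $\mdel(\nu)$ from the cosine form in~\eqref{eq:multiplier-cosine} by Taylor-expanding the cosine, integrating term by term in spherical coordinates, and recognizing the resulting power series in $\tfrac14\|\nu\|^2\delta^2$ as ${}_2F_3$ by identifying each coefficient with a Pochhammer ratio.

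First, I would write $\cos(\nu\cdot z)-1=\sum_{k=1}^\infty(-1)^k(\nu\cdot z)^{2k}/(2k)!$ and justify interchanging sum and integral. The absolute series sums to $\cosh(|\nu\cdot z|)-1$, and since $\cosh(t)-1\le Ct^2$ for $|t|\le\|\nu\|\delta$, the full series is pointwise bounded on $B_\delta(0)$ by $C\|\nu\|^2\|z\|^2$. Hence the integrand of the absolute series is dominated by $C\|\nu\|^2\|z\|^{2-\beta}$, which is integrable on $B_\delta(0)$ precisely because $\beta<n+2$, so Fubini--Tonelli yields
\[
\mdel(\nu)=\cdel\sum_{k=1}^\infty\frac{(-1)^k}{(2k)!}\int_{B_\delta(0)}\frac{(\nu\cdot z)^{2k}}{\|z\|^\beta}\,dz.
\]

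Next, I would pass to spherical coordinates $z=r\omega$, pull $\|\nu\|^{2k}$ outside, and separate the integral. The radial part $\int_0^\delta r^{n+2k-1-\beta}\,dr$ equals $\delta^{n+2k-\beta}/(n+2k-\beta)$ (finite for $k\ge 1$ since $\beta<n+2\le n+2k$), and by rotational invariance together with the standard Beta-function evaluation of $\int_{S^{n-1}}\omega_1^{2k}\,dS$, the angular part equals $2\pi^{(n-1)/2}\Gamma(k+\tfrac12)/\Gamma(k+\tfrac{n}{2})$. Substituting the explicit $\cdel$ from~\eqref{eq:cdel-explicit} and then using the Legendre duplication identity $\Gamma(k+\tfrac12)=\sqrt{\pi}\,(2k)!/(4^k k!)$ cancels the $(2k)!$ factor; reindexing $j=k-1$ yields
\[
\mdel(\nu)=-\|\nu\|^2(n+2-\beta)\sum_{j=0}^\infty\frac{1}{(j+1)(n+2j+2-\beta)\,((n+2)/2)_j}\cdot\frac{(-\|\nu\|^2\delta^2/4)^j}{j!}.
\]

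Finally, I would rewrite each $j$-dependent factor as a Pochhammer ratio: $1/(j+1)=(1)_j/(2)_j$, and the identity $(a)_j/(a+1)_j=a/(a+j)$ with $a=(n+2-\beta)/2$ gives $(n+2-\beta)/(n+2j+2-\beta)=((n+2-\beta)/2)_j/((n+4-\beta)/2)_j$. Combining, the $j$-th coefficient becomes $(1)_j((n+2-\beta)/2)_j/[(2)_j((n+2)/2)_j((n+4-\beta)/2)_j]$, which is precisely the $j$-th term of ${}_2F_3(1,(n+2-\beta)/2;2,(n+2)/2,(n+4-\beta)/2;-\tfrac14\|\nu\|^2\delta^2)$, yielding~\eqref{eq:multiplier-general}. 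The main obstacle is combinatorial bookkeeping — arranging the Gamma-function ratios so that the three lower parameters land exactly in the form $(2,(n+2)/2,(n+4-\beta)/2)$ — rather than anything analytic, since the only analytic subtlety (term-by-term integration uniformly for $\beta<n+2$) is already handled by the $\cosh$-bound above.
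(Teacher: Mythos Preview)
Your proof is correct and follows the same high-level strategy as the paper---expand the cosine, integrate term by term, and recognize the resulting power series as ${}_2F_3$---but the execution differs in two respects worth noting. First, the paper splits into the cases $n=1$ and $n\ge 2$ and, for $n\ge 2$, evaluates the inner spherical integral via the Bessel-function identity $\int_{S^{n-1}}\cos(\|\nu\|r\cos\phi_1)\,dS = 2\pi^{n/2}J_{(n-2)/2}(\|\nu\|r)/(\tfrac12\|\nu\|r)^{(n-2)/2}$ before passing to its power series; you instead compute the angular moments $\int_{S^{n-1}}\omega_1^{2k}\,dS$ directly and use Legendre duplication to collapse the $(2k)!$. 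Your route is more uniform (no case split; the moment formula covers $n=1$ as well) and avoids citing external Bessel identities, while the paper's Bessel-function intermediate step pays off later in the $\beta\to-\infty$ analysis of Theorem~\ref{thm:big-beta-mult}. Second, the paper identifies the hypergeometric series by computing the coefficient ratio $a_{k+1}/a_k$, whereas you pin down each factor as a Pochhammer quotient directly via $(1)_j/(2)_j=1/(j{+}1)$ and $(a)_j/(a{+}1)_j=a/(a{+}j)$; both are standard, and yours is slightly more transparent about where each lower parameter comes from. Finally, your $\cosh$-domination argument for the interchange of sum and integral is a point the paper leaves implicit, so your write-up is in that sense more careful.
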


\begin{proof}
The proof is divided into two cases: $n=1$ and $n\ge 2$.  When $n=1$, and the evenness of cosine implies that
\begin{equation*}
	\int_{B_\delta(0)}\frac{\cos(\nu\cdot z)-1}{\|z\|^\beta}\;dz
    = 2\int_{0}^\delta\frac{\cos(\nu z)-1}{z^\beta}\;dz.
\end{equation*}
Expanding the cosine as a power series and integrating yields
\begin{equation*}
\begin{split}
	(\cdel)^{-1}\mdel(\nu)
    &= 2\sum_{k=1}^\infty\int_{0}^\delta \frac{\left(-\nu^2\right)^{k}}{(2k)!}z^{2k-\beta}\;dz \\
    &= 2\sum_{k=1}^\infty \frac{\left(-\nu^2\right)^{k}}{(2k+1-\beta)(2k)!}\delta^{2k+1-\beta}\\
    &= -2\nu^2\delta^{3-\beta}\sum_{k=1}^\infty\frac{\left(-\nu^2\delta^2\right)^{k-1}}{(2k+1-\beta)(2k)!}\\
    &= -2\nu^2\delta^{3-\beta}\sum_{k=0}^\infty\frac{\left(-\nu^2\delta^2\right)^{k}}{(2k+3-\beta)(2k+2)!}\\
    &= -\frac{2\nu^2\delta^{3-\beta}}{2(3-\beta)}\sum_{k=0}^\infty\frac{2(3-\beta)4^k}{(2k+3-\beta)(2k+2)!}
    \left(-\frac{1}{4}\nu^2\delta^2\right)^{k}\\
    &= -\frac{2\nu^2\delta^{3-\beta}}{2(3-\beta)}\sum_{k=0}^\infty a_k\left(-\frac{1}{4}\nu^2\delta^2\right)^{k},
\end{split}
\end{equation*}
where
\begin{equation*}
	a_k = \frac{2(3-\beta)4^k}{(2k+3-\beta)(2k+2)!}.
\end{equation*}

The series can be recognized as a hypergeometric series with coefficient ratio
\begin{equation*}
\begin{split}
	\frac{a_{k+1}}{a_k} &= 
    \frac{(2k+3-\beta)(2k+2)!4^{k+1}}{(2k+5-\beta)(2k+4)!4^k}\\
    &=\frac{4(2k+3-\beta)(k+1)}{(2k+5-\beta)(2k+4)(2k+3)(k+1)}\\
    &=\frac{\left(k+\frac{3-\beta}{2}\right)(k+1)}{\left(k+\frac{5-\beta}{2}\right)(k+2)\left(k+\frac{3}{2}\right)(k+1)},
\end{split}
\end{equation*}
giving the following representation in terms of the $_2F_3$ hypergeometric function
\begin{equation*}
	(\cdel)^{-1}\mdel(\nu)
    = -\frac{\nu^2\delta^{3-\beta}}{3-\beta}
    \,_2F_3\left(1,\frac{3-\beta}{2};2,\frac{3}{2},\frac{5-\beta}{2};-\frac{1}{4}\|\nu\|^2\delta^2\right).
\end{equation*}
With the choice of $\cdel$ given in~\eqref{eq:cdel-explicit}, it follows that
\begin{equation}\label{eq:multiplier-1}
	\mdel(\nu) = -\nu^2\,_2F_3\left(1,\frac{3-\beta}{2};2,\frac{3}{2},\frac{5-\beta}{2};-\frac{1}{4}\nu^2\delta^2\right),
\end{equation}
which agrees with~\eqref{eq:multiplier-general} when $n=1$.

    In general $n$-dimensional space with $n\ge 2$, it is convenient to align $\nu$ to point in the ``simplest'' direction in spherical coordinates, giving the expression $\nu\cdot z=\|\nu\|r\cos(\phi_1)$. In these coordinates, the multiplier $\mdel(\nu)$ defined in~\eqref{eq:multiplier-cosine} is expressed as
    \begin{equation}\label{eq:big-integral}
    \begin{split}
      (\cdel)^{-1}\mdel(\nu) &=
      \int_0^\delta\frac{1}{r^{\beta-n+1}}
      \int_{S^{n-1}}\left(\cos(\|\nu\|r\cos\phi_1)-1\right)\;d_{S^{n-1}}V\;dr.
    \end{split}
    \end{equation}
The innermost integral can be written as
\begin{equation}\label{eq:inner-S1}
  \int_{S^{n-2}}\int_0^\pi \cos(\|\nu\|r\cos\phi_1)\sin^{n-2}\phi_1\;d\phi_1 d_{S^{n-2}}V - \frac{2\pi^{n/2}}{\Gamma\left(\frac{n}{2}\right)}.
\end{equation}
Applying~\cite[Eq.~(9.1.20)]{a_and_s} shows that
\begin{equation}\label{eq:inner-bessel}
  \int_{S^{n-1}}\left(\cos(\|\nu\|r\cos\phi_1)-1\right)\;d_{S^{n-1}}V
  = 2\pi^{n/2}\left(
    \frac{J_{(n-2)/2}(\|\nu\|r)}{\left(\frac{1}{2}\|\nu\|r\right)^{(n-2)/2}}
    -\frac{1}{\Gamma\left(\frac{n}{2}\right)}
  \right),
\end{equation}
and using~\cite[Eq.~(9.1.10)]{a_and_s} gives the series representation
\begin{equation}\label{eq:inner-series}
  \int_{S^{n-1}}\left(\cos(\|\nu\|r\cos\phi_1)-1\right)\;d_{S^{n-1}}V
  = 2\pi^{n/2}\sum_{k=1}^\infty
  \frac{\left(-\frac{1}{4}\|\nu\|^2r^2\right)^{k}}
  {k!\Gamma\left(\frac{n}{2}+k\right)}.
\end{equation}

Substituting~\eqref{eq:inner-series} into~\eqref{eq:big-integral} shows that
\begin{equation*}
\begin{split}
	(\cdel)^{-1}\mdel(\nu)
    &= 2\pi^{n/2}\sum_{k=1}^\infty\int_0^\delta
    \frac{\left(-\frac{1}{4}\right)^k\|\nu\|^{2k}r^{2k+n-\beta-1}}
  {k!\Gamma\left(\frac{n}{2}+k\right)}\;dr \\
  &= 2\pi^{n/2}\sum_{k=1}^\infty\frac{\left(-\frac{1}{4}\right)^k\|\nu\|^{2k}\delta^{2k+n-\beta}}
  {(2k+n-\beta)k!\Gamma\left(\frac{n}{2}+k\right)} \\
  &= 2\pi^{n/2}\left(-\frac{1}{4}\right)\|\nu\|^2\delta^{2+n-\beta}
  \sum_{k=1}^\infty\frac{\left(-\frac{1}{4}\|\nu\|^{2}\delta^{2}\right)^{k-1}}
  {(2k+n-\beta)k!\Gamma\left(\frac{n}{2}+k\right)}\\
  &= -\frac{\pi^{n/2}\|\nu\|^2\delta^{2+n-\beta}}{2(n+2-\beta)\Gamma\left(\frac{n}{2}+1\right)}
  \sum_{k=0}^\infty a_k\left(-\frac{1}{4}\|\nu\|^{2}\delta^{2}\right)^{k},
\end{split}
\end{equation*}
where
\begin{equation*}\label{eq:ak-coefficients}
	a_k = \frac{(n+2-\beta)\Gamma\left(\frac{n}{2}+1\right)}
  {(2k+2+n-\beta)(k+1)!\Gamma\left(\frac{n}{2}+k+1\right)}.
\end{equation*}
The series can be recognized as a hypergeometric series with coefficient ratio
\begin{equation*}
\begin{split}
	\frac{a_{k+1}}{a_k} &= 
    \frac{(2k+2+n-\beta)(k+1)!\Gamma\left(\frac{n}{2}+k+1\right)}
    {(2k+4+n-\beta)(k+2)!\Gamma\left(\frac{n}{2}+k+2\right)}\\
    &=\frac{\left(k+\frac{n+2-\beta}{2}\right)(k+1)}
    {\left(k+\frac{n+4-\beta}{2}\right)\left(k+\frac{n+2}{2}\right)(k+2)(k+1)},
\end{split}
\end{equation*}
giving the following representation in terms of the $_2F_3$ hypergeometric function.
\begin{equation*}
	(\cdel)^{-1}\mdel(\nu)
    = -\frac{\pi^{n/2}\|\nu\|^2\delta^{2+n-\beta}}{2(n+2-\beta)\Gamma\left(\frac{n}{2}+1\right)}
    \,_2F_3\left(1,\frac{n+2-\beta}{2};2,\frac{n+2}{2},\frac{n+4-\beta}{2};-\frac{1}{4}\|\nu\|^2\delta^2\right).
\end{equation*}
Again, since $\cdel$ is given by~\eqref{eq:cdel-explicit}, the result in~\eqref{eq:multiplier-general} follows.
\end{proof}

\subsection{Extending the definition of $\Ldel$}\label{sec:generalized}

Thus far, we have considered the nonlocal operator $\Ldel$ only for the range of parameter $\beta<n+2$. It is possible to extend the definition of $\Ldel$ to a pseudo-differential operator for  larger $\beta$; $\Ldel$ is simply defined to be the operator with multipliers given by~\eqref{eq:multiplier-general}. The formula given for $\mdel(\nu)$ makes sense for all $\beta$ except those for which the parameter $(n+4-\beta)/2$ is a non-positive integer.  In other words, we may use~\eqref{eq:multiplier-general} to define $\Ldel$ for any $\beta\in\mathbb{R}\setminus\{n+4,n+6,n+8,\ldots\}$.  For $\beta < n+2$, this definition coincides with the integral definition~\eqref{eq:nonlocal_laplacian}.  When $\beta > n+2$, the operator defined through its multipliers is a pseudo-differential operator, and when $\beta=n+2$, we have seen that the $\Ldel$ operator coincides with the Laplacian.  In this section, we consider these different values of $\beta$ more carefully.

\subsubsection{The case $\boldsymbol{\beta}\mathbf{=n+2}$}

If $\beta=n+2$ then~\eqref{eq:multiplier-general} becomes
\begin{equation*}
\mdel(\nu) = -\|\nu\|^2\,_2F_3\left(1,0;2,\frac{n+2}{2},1;-\frac{1}{4}\|\nu\|^2\delta^2\right) = -\|\nu\|^2.
\end{equation*}
In other words, $\Ldel=\Delta$, which is consistent with the results of Section~\ref{sec:convergence-laplacian}.

\subsubsection{The case $\boldsymbol{\beta}\mathbf{\to-\infty}$}

Intuitively, as $\beta\to-\infty$ the most significant contribution from the kernel comes from the boundary of $B_\delta(x)$ and, thus, one might expect that $\Ldel$ can be approximated by a surface integral.  This observation is quantified in the following lemma.

\begin{lemma}\label{lem:big-beta}
Let  $n\ge 1$, $\delta>0$, and  $B_\delta:=B_\delta(0)$.  Suppose $f$ is a continuous function on a larger ball centered at the origin.  Then
\begin{equation}\label{eq:big-beta}
\lim_{\beta\to-\infty}\cdel\int_{B_\delta}\frac{f(z)}{\|z\|^{\beta}}\;dz
= \frac{2\Gamma\left(\frac{n}{2}+1\right)}{\pi^{n/2}\delta^2}
\int_{S^{n-1}}f(\delta w)\;d_{S^{n-1}}V.
\end{equation}
\end{lemma}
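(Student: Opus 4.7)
The plan is to reduce the volume integral to a one-dimensional integral in the radial variable and exploit the fact that, as $\beta\to-\infty$, the weight $\|z\|^{-\beta}=r^{|\beta|}$ concentrates all its mass at the boundary $r=\delta$. The explicit form of $\cdel$ in~\eqref{eq:cdel-explicit} will then normalize things correctly.

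First, I would pass to spherical coordinates $z=rw$, $r\in(0,\delta)$, $w\in S^{n-1}$, and set
\begin{equation*}
g(r) := \int_{S^{n-1}} f(rw)\;d_{S^{n-1}}V,
\end{equation*}
which is continuous on $[0,\delta]$ since $f$ is continuous on a ball strictly containing $B_\delta$. Then
\begin{equation*}
\int_{B_\delta}\frac{f(z)}{\|z\|^\beta}\;dz
= \int_0^\delta r^{n-1-\beta}\, g(r)\;dr
= \delta^{n-\beta}\int_0^1 t^{n-1-\beta}\, g(\delta t)\;dt,
\end{equation*}
after the substitution $t=r/\delta$.

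Next, I would prove the concentration claim
\begin{equation*}
\lim_{\beta\to-\infty}(n-\beta)\int_0^1 t^{n-1-\beta}\, g(\delta t)\;dt = g(\delta).
\end{equation*}
Since $\int_0^1(n-\beta)t^{n-1-\beta}\,dt=1$, this is the statement that the probability measures with densities $(n-\beta)t^{n-1-\beta}$ on $[0,1]$ converge weakly to the Dirac mass at $t=1$. I would make this rigorous by a standard split: fix $\varepsilon>0$ and write
\begin{equation*}
(n-\beta)\int_0^1 t^{n-1-\beta} g(\delta t)\,dt
= (n-\beta)\int_0^{1-\varepsilon} t^{n-1-\beta} g(\delta t)\,dt
+ (n-\beta)\int_{1-\varepsilon}^1 t^{n-1-\beta} g(\delta t)\,dt.
\end{equation*}
Continuity of $f$ bounds $|g|$ by some $M$, and the first piece is bounded by $M(1-\varepsilon)^{n-\beta}$, which tends to $0$ as $\beta\to-\infty$. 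For the second piece, continuity of $g$ at $\delta$ yields $|g(\delta t)-g(\delta)|<\eta$ on $[1-\varepsilon,1]$ for $\varepsilon$ small, so that piece differs from $g(\delta)\bigl(1-(1-\varepsilon)^{n-\beta}\bigr)$ by at most $\eta$. Letting $\beta\to-\infty$ and then $\varepsilon,\eta\to 0$ gives the claim.

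Finally, combining the two steps,
\begin{equation*}
\cdel\int_{B_\delta}\frac{f(z)}{\|z\|^\beta}\;dz
= \cdel\,\frac{\delta^{n-\beta}}{n-\beta}\,\bigl(g(\delta)+o(1)\bigr)
= \frac{2(n+2-\beta)\Gamma\!\left(\tfrac{n}{2}+1\right)}{\pi^{n/2}\delta^2(n-\beta)}\bigl(g(\delta)+o(1)\bigr),
\end{equation*}
using the explicit $\cdel$ from~\eqref{eq:cdel-explicit}. The prefactor $(n+2-\beta)/(n-\beta)\to 1$ as $\beta\to-\infty$, so the limit equals $\tfrac{2\Gamma(n/2+1)}{\pi^{n/2}\delta^2}g(\delta)$, which is precisely~\eqref{eq:big-beta}. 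The only nontrivial step is the concentration lemma for $t^{n-1-\beta}$; that is routine but must be executed carefully since the exponent depends on the parameter being sent to infinity.
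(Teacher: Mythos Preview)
Your proof is correct and follows essentially the same strategy as the paper: pass to spherical coordinates and use a splitting argument in the radial variable to show that the weight concentrates at $r=\delta$, then invoke the explicit formula for $\cdel$. The only organizational difference is that you apply Fubini to integrate over $S^{n-1}$ first (working with the spherical average $g(r)$), whereas the paper keeps the sphere integral on the outside and shows the inner radial integral converges to $f(\delta w)$ pointwise in $w$; your ordering sidesteps the need to pass the limit through the outer sphere integral, but the core concentration estimate is identical.
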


\begin{proof}
Writing $z$ in spherical coordinates $z=rw$ with $r\ge 0$ and $w\in S^{n-1}$, we obtain
\begin{equation}\label{eq:big-beta-spherical}
\cdel\int_{B_\delta}f(z)\|z\|^{-\beta} = 
\frac{2\Gamma\left(\frac{n}{2}+1\right)}{\pi^{n/2}\delta^2}
\int_{S^{n-1}}\left[\frac{2+n-\beta}{\delta^{n-\beta}}\int_0^\delta f(rw)r^{n-\beta-1}\;dr\right]\;d_{S^{n-1}}V.
\end{equation}

Fix $\epsilon>0$ and let $\delta'\in(0,\delta)$ be sufficiently close to $\delta$ that $|f(rw)-f(\delta w)|<\epsilon$ for all $w\in S^{n-1}$ and all $r\in[\delta',\delta]$.  Using this choice of $\delta'$, we may rewrite the inner integral of~\eqref{eq:big-beta-spherical} as
\begin{equation}\label{eq:big-beta-inner-split}
\int_0^\delta f(rw)r^{n-\beta-1}\;dr
= \int_0^{\delta'} f(rw)r^{n-\beta-1}\;dr
  + \int_{\delta'}^\delta f(rw)r^{n-\beta-1}\;dr.
\end{equation}

For the first term on the right-hand side of~\eqref{eq:big-beta-inner-split}, observe that
\begin{equation*}
\left|\frac{2+n-\beta}{\delta^{n-\beta}}\int_0^{\delta'}
f(rw)r^{n-\beta-1}\;dr\right|
\le \frac{2+n-\beta}{n-\beta}M\left(\frac{\delta'}{\delta}\right)^{n-\beta},\quad\text{where}\quad
M = \sup_{x\in B_\delta}|f(x)|.
\end{equation*}
The quantity on the right goes to zero as $\beta\to-\infty$.

For the second term on the right-hand side of~\eqref{eq:big-beta-inner-split}, note that
\begin{equation*}
\int_{\delta'}^\delta f(rw)r^{n-\beta-1}\;dr
= f(\delta w)\int_{\delta'}^\delta r^{n-\beta-1}\;dr
+ \int_{\delta'}^\delta(f(rw)-f(\delta w))r^{n-\beta-1}\;dr,
\end{equation*}
which implies that
\begin{equation*}
\begin{split}
\left|\frac{2+n-\beta}{\delta^{n-\beta}}
\int_{\delta'}^\delta f(rw)r^{n-\beta-1}\;dr
- f(\delta w)\right|
\le
M\left|\frac{2}{n-\beta}-\frac{2+n-\beta}{n-\beta}\left(\frac{\delta'}{\delta}\right)^{n-\beta}\right|
\\
+\epsilon\frac{2+n-\beta}{n-\beta}\left(1-\left(\frac{\delta'}{\delta}\right)^{n-\beta}\right).
\end{split}
\end{equation*}
The term on the right approaches $\epsilon$ as $\beta\to-\infty$.

Since $\epsilon>0$ was arbitrary, this implies that
\begin{equation*}
\lim_{\beta\to-\infty}
\frac{2+n-\beta}{\delta^{n-\beta}}\int_0^\delta f(rw)r^{n-\beta-1}\;dr
= f(\delta w).
\end{equation*}
Substituting into~\eqref{eq:big-beta-spherical} gives~\eqref{eq:big-beta}.
\end{proof}

Lemma~\ref{lem:big-beta} provides an integral representation for the limiting operator applied to continuous functions $u$.
\begin{theorem}\label{thm:Ldel-infty}
Let $u\in C(\mathbb{R}^n)$ and let $\delta > 0$.  Then
\begin{equation*}
    \lim_{\beta\to-\infty}\Ldel u(x) = 
    \frac{2\Gamma\left(\frac{n}{2}+1\right)}{\pi^{n/2}\delta^2}
    \int_{S^{n-1}}\left(u(x+\delta w)-u(x)\right)\textbf{}\;d_{S^{n-1}}V.
\end{equation*}
\end{theorem}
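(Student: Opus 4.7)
The plan is to recognize this as an immediate corollary of Lemma~\ref{lem:big-beta}. Fix $x\in\mathbb{R}^n$ and define the translated difference $f(z) := u(x+z)-u(x)$, thought of as a function of $z$ with $x$ playing the role of a fixed parameter. Since $u\in C(\mathbb{R}^n)$, the function $f$ is continuous on all of $\mathbb{R}^n$, hence continuous on any ball centered at the origin strictly containing $B_\delta$. Thus $f$ satisfies the hypothesis of Lemma~\ref{lem:big-beta}.

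Next, since we are taking the limit $\beta\to-\infty$, we may assume $\beta<n+2$ and therefore use the original integral definition~\eqref{eq:nonlocal_laplacian} of $\Ldel$. Rewriting this definition in terms of $f$ gives
\[
\Ldel u(x) = \cdel\int_{B_\delta(0)}\frac{u(x+z)-u(x)}{\|z\|^\beta}\;dz = \cdel\int_{B_\delta(0)}\frac{f(z)}{\|z\|^\beta}\;dz.
\]
Applying Lemma~\ref{lem:big-beta} to the right-hand side and substituting $f(\delta w)=u(x+\delta w)-u(x)$ yields
\[
\lim_{\beta\to-\infty}\Ldel u(x)=\frac{2\Gamma\left(\frac{n}{2}+1\right)}{\pi^{n/2}\delta^2}\int_{S^{n-1}}\bigl(u(x+\delta w)-u(x)\bigr)\;d_{S^{n-1}}V,
\]
which is precisely the claimed identity.

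Because the work has been absorbed into Lemma~\ref{lem:big-beta}, there is no substantive obstacle. The only very minor point to keep clear is that the $f$ in the lemma depends on the fixed evaluation point $x$, but since $x$ is held constant throughout the limit in $\beta$, this causes no difficulty and no uniformity issue needs to be addressed.
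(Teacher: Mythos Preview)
Your proof is correct and follows exactly the same approach as the paper: apply Lemma~\ref{lem:big-beta} to the function $f(z)=u(x+z)-u(x)$.
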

\begin{proof}
This is a straightforward application of Lemma~\ref{lem:big-beta} to the function $f(z)=u(x+z)-u(x)$.
\end{proof}

The limiting operator can also be applied to more general functions through the limits of the Fourier multipliers.

\begin{theorem}\label{thm:big-beta-mult}
Let $n\ge 1$ and let $\delta > 0$.  Then
\begin{equation*}
    \lim_{\beta\to-\infty}\mdel(\nu) =
    \frac{4\Gamma\left(\frac{n}{2}+1\right)}{\delta^2}\left(
    \frac{J_{(n-2)/2}(\|\nu\|\delta)}{\left(\frac{1}{2}\|\nu\|\delta\right)^{(n-2)/2}}
    -\frac{1}{\Gamma\left(\frac{n}{2}\right)}
  \right)
  =
    -\|\nu\|^2\;{}_1F_2\left(1;2,\frac{n+2}{2};-\frac{1}{4}\|\nu\|^2\delta^2\right).
\end{equation*}
\end{theorem}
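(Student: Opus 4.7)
The plan is to start from the cosine representation of the multiplier, apply Lemma~\ref{lem:big-beta} directly to recover a surface integral on $S^{n-1}$ of radius $\delta$, and then evaluate that surface integral by the two formulas already proved as intermediate steps in Theorem~\ref{thm:multipliers-2F3} (equations~\eqref{eq:inner-bessel} and~\eqref{eq:inner-series}). This produces the two forms in the statement simultaneously, without needing to take $\beta \to -\infty$ inside the hypergeometric function $_2F_3$.

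In the range $\beta<n+2$, the multiplier admits the integral representation~\eqref{eq:multiplier-cosine},
\[
\mdel(\nu) = \cdel\int_{B_\delta(0)}\frac{\cos(\nu\cdot z)-1}{\|z\|^\beta}\,dz,
\]
and the factor $f(z) := \cos(\nu\cdot z)-1$ is continuous (and bounded) on $\Rn$ independently of $\beta$. Lemma~\ref{lem:big-beta} therefore applies to this $f$ and gives
\[
\lim_{\beta\to-\infty}\mdel(\nu)
= \frac{2\Gamma\left(\frac{n}{2}+1\right)}{\pi^{n/2}\delta^2}\int_{S^{n-1}}\bigl(\cos(\nu\cdot\delta w)-1\bigr)\,d_{S^{n-1}}V.
\]
Setting $r=\delta$ in formula~\eqref{eq:inner-bessel} evaluates this surface integral in closed form via $J_{(n-2)/2}$, and substituting back yields the first equality in the statement.

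For the second equality, I would instead use the series version~\eqref{eq:inner-series} with $r=\delta$, producing
\[
\lim_{\beta\to-\infty}\mdel(\nu) = \frac{4\Gamma\left(\frac{n}{2}+1\right)}{\delta^2}\sum_{k=1}^\infty\frac{\left(-\frac{1}{4}\|\nu\|^2\delta^2\right)^k}{k!\,\Gamma\left(\frac{n}{2}+k\right)}.
\]
Reindexing $k=j+1$, pulling the leading factor $-\frac{1}{4}\|\nu\|^2\delta^2$ out of the sum, and using the Pochhammer identities $(j+1)!=(2)_j$ and $\Gamma(\tfrac{n}{2}+1+j)=\Gamma(\tfrac{n}{2}+1)(\tfrac{n+2}{2})_j$ collapses the series into $-\|\nu\|^2\,{}_1F_2\!\left(1;2,\tfrac{n+2}{2};-\tfrac{1}{4}\|\nu\|^2\delta^2\right)$, as desired.

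There is no genuine obstacle here, since the proof is essentially a pairing of Lemma~\ref{lem:big-beta} with the two surface-integral evaluations that were already extracted in Section~\ref{sec:hypergeometric}. The only point worth flagging is that one should not attempt to take $\beta\to-\infty$ inside the $_2F_3$ expression~\eqref{eq:multiplier-general} directly: while the termwise limit is correct (because $((n+2-\beta)/2)_k / ((n+4-\beta)/2)_k \to 1$ as $\beta\to-\infty$), justifying the exchange of limit and sum in the series would require a separate dominated-convergence argument, whereas Lemma~\ref{lem:big-beta} already supplies uniform control of the spatial integral.
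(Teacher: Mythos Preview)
Your proposal is correct and follows essentially the same approach as the paper: apply Lemma~\ref{lem:big-beta} to the cosine representation~\eqref{eq:multiplier-cosine}, then evaluate the resulting surface integral using~\eqref{eq:inner-bessel} and~\eqref{eq:inner-series} at $r=\delta$. The only cosmetic difference is that the paper recognizes the $_1F_2$ series by computing the ratio of successive coefficients, whereas you identify it directly via Pochhammer identities; your remark about avoiding a termwise limit in the $_2F_3$ formula is a useful aside not present in the paper.
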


\begin{proof}
The theorem follows from an application of Lemma~\ref{lem:big-beta} to the integral representation~\eqref{eq:multiplier-cosine}.  Since, in this case, $f(z)=\cos(\nu\cdot z)-1$, it follows that
\begin{equation*}
    \lim_{\beta\to-\infty}\mdel(\nu) = \frac{2\Gamma\left(\frac{n}{2}+1\right)}{\pi^{n/2}\delta^2}
    \int_{S^{n-1}}\left(\cos(\delta\nu\cdot w)-1\right)\;d_{S^{n-1}}V.
\end{equation*}
The Bessel function representation comes from substituting $r=\delta$ into~\eqref{eq:inner-bessel}, while the hypergeometric function representation follows from~\eqref{eq:inner-series} (with $r=\delta$):
\begin{equation*}
\begin{split}
    \int_{S^{n-1}}\left(\cos(\delta\nu\cdot w)-1\right)\;d_{S^{n-1}}V
    &= 2\pi^{n/2}\sum_{k=1}^\infty
  \frac{\left(-\frac{1}{4}\|\nu\|^2r^2\right)^{k}}
  {k!\Gamma\left(\frac{n}{2}+k\right)}\\
  &= -\frac{\|\nu\|^2\delta^2\pi^{n/2}}{2\Gamma\left(\frac{n}{2}+1\right)}
  \sum_{k=0}^\infty\frac{\Gamma\left(\frac{n}{2}+1\right)}{(k+1)!\Gamma\left(\frac{n}{2}+k+1\right)}\left(-\frac{1}{4}\|\nu\|^2\delta^2\right)^k.
\end{split}
\end{equation*}
The theorem follows from the observation that the series is a hypergeometric series with ratio of successive coefficients equal to
\begin{equation*}
    \frac{1}{(k+2)\left(k+\frac{n}{2}+1\right)} = 
    \frac{(k+1)}{(k+1)(k+2)\left(k+\frac{n+2}{2}\right)}.
\end{equation*}
\end{proof}

Theorem~\ref{thm:big-beta-mult} provides a natural definition of a limiting operator $\Ldelinf$ defined on a Schwartz function $u$ through the multipliers
\begin{gather}
\Ldelinf u(x) = \int_{\mathbb{R}^n}\mdelinf(\nu)\hat{u}(\nu)e^{i\nu\cdot x}\;d\nu,
	\qquad\text{where}\\
\label{eq:m-infinity}
    \mdelinf(\nu) := \lim_{\beta\to-\infty}\mdel(\nu) = 
    -\|\nu\|^2{}_1F_2\left(1;2,\frac{n+2}{2};-\frac{1}{4}\|\nu\|^2\delta^2\right).
\end{gather}

\subsubsection{The case $\boldsymbol{\beta}\mathbf{\to\infty}$}
Unlike in the previous case, there is no limiting operator as $\beta\to\infty$ (even if we omit the special cases $\beta=n+4,n+6,\ldots$).  This is most easy to see from the asymptotic analysis in the following section, where it is shown that, for $\beta>n$, the Fourier multipliers $\mdel(\nu)$ grow in magnitude as $\|\nu\|^{\beta-n}$.  For large frequencies and large $\beta$, the $\Ldel$ operator behaves similarly to a $(\beta-n)$-order differential operator and, thus, we should not expect to obtain a limit operator as we did for the case $\beta\to-\infty$.
\subsection{Convergence to the Laplacian}\label{sec:convergence-laplacian}
A direct consequence of the characterization of the multipliers  in Theorem~\ref{thm:multipliers-2F3} and the continuity of $_2F_3$ is given by the following result.
\begin{cor}\label{cor:delta-beta}
Let $n\ge 1$ and $\nu\in\Rn$.  Then the  Fourier multipliers $\mdel$  in~\eqref{eq:multiplier-cosine} converge to the Fourier multipliers of the Laplacian  as follows
\[
\lim_{\delta\rightarrow 0^+}\mdel(\nu)=-\|\nu\|^2, \;\;\mbox{ for } \;\; \beta\in\mathbb{R}\setminus\{n+4,n+6,n+8,\ldots\},
\]
and
\[
\lim_{\beta\rightarrow n+2} \mdel(\nu)=-\|\nu\|^2, \;\;\mbox{ for }\;\; \delta>0.
\] 
\end{cor}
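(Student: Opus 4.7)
The plan is to apply the closed-form representation from Theorem~\ref{thm:multipliers-2F3},
\[
\mdel(\nu) = -\|\nu\|^2\,{}_2F_3\!\left(1,\tfrac{n+2-\beta}{2};\,2,\tfrac{n+2}{2},\tfrac{n+4-\beta}{2};\,-\tfrac{1}{4}\|\nu\|^2\delta^2\right),
\]
and reduce each assertion to showing that the ${}_2F_3$ factor tends to $1$ in the appropriate regime. For the $\delta\to 0^+$ limit this amounts to continuity of the series in its argument, while for the $\beta\to n+2$ limit it amounts to continuity of the series in its parameters.

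For the first limit, I would use that ${}_2F_3$ has infinite radius of convergence (since $p=2<q+1=4$), so the series defines an entire function of its argument whenever the lower Pochhammer parameters avoid the pole set $\{0,-1,-2,\ldots\}$. The hypothesis $\beta\in\bbR\setminus\{n+4,n+6,\ldots\}$ is exactly what keeps $(n+4-\beta)/2$ off that pole set. Since the argument $-\tfrac14\|\nu\|^2\delta^2$ tends to $0$ as $\delta\to 0^+$ and only the $k=0$ term survives at the origin, the ${}_2F_3$ factor tends to $1$, giving $\mdel(\nu)\to-\|\nu\|^2$.

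For the second limit I would fix $\delta>0$ and set $\epsilon:=(n+2-\beta)/2$, so $\epsilon\to 0$ as $\beta\to n+2$. The Pochhammer identity
\[
\frac{(\epsilon)_k}{(\epsilon+1)_k}=\frac{\epsilon}{\epsilon+k}\qquad(k\ge 1)
\]
isolates the entire $\beta$-dependence of the $k$-th coefficient into the factor $\epsilon/(\epsilon+k)$. Termwise, every $k\ge 1$ contribution vanishes as $\beta\to n+2$, while the $k=0$ term equals $1$. To exchange the limit with the infinite sum I would invoke dominated convergence for series: for $|\epsilon|\le 1/2$ one has $|\epsilon/(\epsilon+k)|\le 1$, so the $k$-th term is dominated by $\frac{(1)_k}{(2)_k((n+2)/2)_k}\frac{|z|^k}{k!}$ with $z=-\tfrac14\|\nu\|^2\delta^2$, and this majorant is summable with sum ${}_1F_2(1;2,(n+2)/2;|z|)<\infty$. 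Hence ${}_2F_3\to 1$ and $\mdel(\nu)\to-\|\nu\|^2$.

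The only nontrivial step is the dominated-convergence bound in the second limit, which requires verifying that the Pochhammer ratio, the sole source of $\beta$-dependence in the coefficients, is controlled uniformly for $\beta$ in a small neighborhood of $n+2$; the identity above reduces this to the elementary bound $|\epsilon/(\epsilon+k)|\le 1$. Once this majorant is in place, both limits follow directly from the hypergeometric representation in Theorem~\ref{thm:multipliers-2F3}.
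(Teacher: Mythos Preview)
Your proposal is correct and follows essentially the same approach as the paper: the paper simply states that the corollary is ``a direct consequence of the characterization of the multipliers in Theorem~\ref{thm:multipliers-2F3} and the continuity of ${}_2F_3$,'' without further detail. Your write-up fills in precisely what that continuity means in each regime---continuity in the argument for the $\delta\to 0^+$ limit, and continuity in the parameters (justified by your Pochhammer identity and dominated-convergence bound) for the $\beta\to n+2$ limit---so it is a more explicit version of the paper's one-line justification rather than a different route.
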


In fact, the    multipliers  converge in $C^k$ as given by the next result.
\begin{cor}\label{cor:delta-beta-lp}
Let $n\ge 1$, $k\in\mathbb{N}\cup\{0\}$, and $\beta\in\mathbb{R}\setminus\{n+4,n+6,n+8,\ldots\}$.  Then 
\[
\lim_{\delta\rightarrow 0^+}\mdel=-\|\cdot\|^2, \;\;\mbox{ in } \;\; C^k(M),
\]
and
\[
\lim_{\beta\rightarrow n+2} \mdel=-\|\cdot\|^2, \;\;\mbox{ in } \;\; C^k(M),
\] 
for any compact subset $M$ in $\Rn$.
\end{cor}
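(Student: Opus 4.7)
The plan is to combine the explicit hypergeometric formula of Theorem~\ref{thm:multipliers-2F3} with the joint analyticity of $_2F_3$ in its $z$-argument and its parameters, then to reduce $C^k(M)$ convergence in $\nu$ to $C^k$ convergence in the single real variable $s = \|\nu\|^2$ on a compact interval.

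First I would set $a = (n+2-\beta)/2$ and $c = (n+4-\beta)/2$ and rewrite
\[
\mdel(\nu) + \|\nu\|^2 = -\|\nu\|^2 \bigl(F(\|\nu\|^2) - 1\bigr), \qquad F(s) := {}_2F_3\bigl(1, a;\, 2, \tfrac{n+2}{2}, c;\, -\tfrac{s\delta^2}{4}\bigr).
\]
Since $\nu \mapsto \|\nu\|^2$ is smooth, a Fa\`a di Bruno (chain rule) computation reduces the claim to showing that $s\bigl(F(s) - 1\bigr) \to 0$ in $C^k(I)$ as $\delta \to 0^+$ or $\beta \to n+2$, where $I = [0, R^2]$ with $R = \sup_{\nu \in M}\|\nu\|$. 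Observe that in both limiting regimes the excluded parameter set $\beta \in \{n+4, n+6, \ldots\}$ is avoided on a whole neighborhood of the limit, so $c$ stays uniformly away from the non-positive integers.

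Because $p = 2 \le q+1 = 4$, the $_2F_3$ series is entire in $z$ and converges uniformly on compacta in $z$ and in the parameters $(a,c)$ (the latter restricted away from the pole set). Hence $F(s)$ is jointly real-analytic in $(s,\beta,\delta)$ on a neighborhood of the relevant limit; its $s$-derivatives may be computed by termwise differentiation, and the resulting series converges uniformly on $I$. Now as $\delta \to 0^+$ each term of the tail $F-1$ carries a factor $\delta^{2k}$ with $k \ge 1$ and tends to zero; as $\beta \to n+2$, each term carries the Pochhammer factor $(a)_k = a(a+1)\cdots(a+k-1) = O(a)$, which also tends to zero. A uniform bound on the tail then upgrades pointwise vanishing of each term to vanishing of the full series in $C^k(I)$.

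The main obstacle is providing the uniform-in-parameter bound that justifies dominated convergence under termwise $s$-differentiation. I expect this to be routine: on any compact set of parameters avoiding the excluded values, the Pochhammer quotient $|(a)_k/(c)_k|$ is bounded by $C\rho^k$ with $\rho$ independent of the parameters, while the remaining coefficient $1/\bigl((2)_k\,(\tfrac{n+2}{2})_k\, k!\bigr)$ decays faster than any geometric rate in $k$. Consequently both the tail and each of its $s$-derivatives are dominated term-by-term by convergent series whose individual terms tend to zero, so dominated convergence yields the required $C^k(I)$ convergence, and hence the $C^k(M)$ convergence by the chain-rule reduction of the first step.
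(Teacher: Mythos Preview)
Your proposal is correct and follows essentially the same route as the paper. The paper's proof is simply a terser version of yours: it observes directly from the hypergeometric series that
\[
\mdel(\nu)-(-\|\nu\|^2)=\delta^2(n+2-\beta)\,g^{\delta,\beta}(\nu)
\]
for a smooth function $g^{\delta,\beta}$ whose $C^k(M)$ norm is bounded uniformly in the parameters, and concludes immediately. Your reduction to the one-variable series $s(F(s)-1)$ together with the termwise dominated-convergence argument is exactly the justification for that uniform bound, spelled out in detail; the factorization the paper uses is implicit in your observation that every tail term carries both a factor $\delta^{2k}$ and a factor $(a)_k=a(a+1)\cdots(a+k-1)$ with $a=(n+2-\beta)/2$.
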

\begin{proof}
We note that Theorem~\ref{thm:multipliers-2F3} implies that
\[
\mdel(\nu)-(-\|\nu\|^2)= \delta^2 (n+2-\beta) g^{\delta,\beta}(\nu),
\]
for some $C^\infty$ function $g^{\delta,\beta}$. Moreover, there is a constant $B_k$ such that
\[
\| \mdel(\nu)-(-\|\nu\|^2)\|_{C^k(M)}\le \delta^2 (n+2-\beta) B_k,
\]
from which the result follows.
\end{proof}

Convergence of peridynamic operators (in both  scalar and vector cases) to the corresponding differential operators in the limit of vanishing nonlocality $\delta\rightarrow 0^+$ is well-know, see for example \cite{emmrich2007well,nonlocal_calc_2013,mengesha_du_heterog_peridynamics_2014, Alali_Gunzburger0,radu2017nonlocal}. Corollary~\ref{cor:delta-beta} recovers the convergence  $\Ldel\rightarrow \Delta$, as $\delta\rightarrow 0^+$, but in the sense of convergence of the multipliers. Moreover, Corollary~\ref{cor:delta-beta} provides a new result on the convergence of the nonlocal multipliers to the local ones in the limit  as $\beta\rightarrow (n+2)^-$. This result, motivates one to provide an analogous new result on the  convergence of the nonlocal Laplacian to the Laplacian  in the limit  as $\beta\rightarrow (n+2)^-$. For completeness of the presentation and due to similarity of the proofs, we also include the case $\delta\rightarrow 0^+$.
\begin{theorem}\label{thm:delta-beta}
Let $n\ge 1$,  $x\in \Rn$, and $u\in C^3(\Rn)$.  Then
\[
\lim_{\delta\rightarrow 0^+}\Ldel u(x)=\Delta u(x), \;\;\mbox{ for } \;\;  \beta<n+2,
\]
and
\[
\lim_{\beta\rightarrow n+2^-}\Ldel u(x)=\Delta u(x), \;\;\mbox{ for }\;\; \delta>0.
\] 
\end{theorem}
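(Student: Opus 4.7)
The plan is to use the symmetric form~\eqref{eq:Ldel_symmetric} of the operator together with a second-order Taylor expansion of $u$ at $x$, exploiting the radial symmetry of the kernel and the normalization that defines $c^{\delta,\beta}$. Writing
\[
  u(x+z)+u(x-z)-2u(x) = (z\cdot\nabla)^2 u(x) + R(x,z),
\]
the $C^3$ hypothesis gives a pointwise remainder bound $|R(x,z)|\le C\|z\|^3$ on any bounded neighborhood of $x$, where $C$ depends on $\sup$-norms of the third derivatives of $u$ near $x$. This is the only regularity estimate needed for the whole argument.

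Substituting into~\eqref{eq:Ldel_symmetric}, I would split $\Ldel u(x)$ into a principal part and a remainder. For the principal part, rotational symmetry of $B_\delta(0)$ kills the off-diagonal $z_iz_j$ terms and gives
\[
  \int_{B_\delta(0)}\frac{z_i z_j}{\|z\|^\beta}\,dz = \frac{\delta_{ij}}{n}\int_{B_\delta(0)}\frac{\|z\|^2}{\|z\|^\beta}\,dz,
\]
so $\sum_{i,j}\partial_i\partial_j u(x)$ collapses to $\Delta u(x)$ times the radial integral. By the very definition~\eqref{eq:cdel-integral} of $\cdel$, the factor $\tfrac{\cdel}{2}\cdot\tfrac{1}{n}\int_{B_\delta(0)}\|z\|^{2-\beta}\,dz$ equals $1$, so the principal part is exactly $\Delta u(x)$, uniformly in $\delta$ and $\beta$.

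It then remains to show that the remainder term
\[
  E^{\delta,\beta}(x) := \frac{\cdel}{2}\int_{B_\delta(0)}\frac{R(x,z)}{\|z\|^\beta}\,dz
\]
vanishes in both limits. Using $|R(x,z)|\le C\|z\|^3$, switching to spherical coordinates, and the closed form~\eqref{eq:cdel-explicit} for $\cdel$, a direct computation shows
\[
  |E^{\delta,\beta}(x)| \;\le\; \frac{n(n+2-\beta)}{n+3-\beta}\,C\,\delta,
\]
valid for all $\beta<n+2$ and all $\delta$ small enough that $u\in C^3$ on $B_\delta(x)$ (which is automatic since $u\in C^3(\mathbb{R}^n)$). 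The right-hand side tends to $0$ as $\delta\to 0^+$ with $\beta$ fixed, and also as $\beta\to(n+2)^-$ with $\delta$ fixed, via the explicit factor $(n+2-\beta)$. This yields both claimed limits simultaneously.

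The main technical point is ensuring the constant $C$ in the Taylor remainder can be chosen independently of the parameters in each limit: for the $\delta\to 0^+$ limit one takes $C = \sup_{\|z\|\le 1}\|D^3 u(x+z)\|$ and restricts to $\delta\le 1$, while for the $\beta\to(n+2)^-$ limit the ball $B_\delta(x)$ is fixed and $C$ is simply the supremum of $\|D^3u\|$ on it. Everything else is elementary once the symmetric form and the normalization of $\cdel$ are in hand; there is no need to invoke Fourier multipliers or the hypergeometric representation.
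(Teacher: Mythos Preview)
Your proposal is correct and follows essentially the same approach as the paper: symmetric form of the operator, second-order Taylor expansion with a $C\|z\|^3$ remainder, identification of the principal part with $\Delta u(x)$ via the normalization~\eqref{eq:cdel-integral}, and the explicit bound $|E^{\delta,\beta}(x)|\le \dfrac{n(n+2-\beta)}{n+3-\beta}\,C\,\delta$, from which both limits follow. The paper's argument is the same up to notation (it uses index notation and Lagrange remainders rather than $(z\cdot\nabla)^2u$), and arrives at the same bound with an inessential factor of~$2$ difference coming from how the remainder constant is defined.
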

\begin{proof}
Note that the integral in~\eqref{eq:Ldel_symmetric} is well-defined for $u\in C^3(\Rn)$ and $\beta<n+2$. Using Taylor's theorem, we expand $u$ about $z=x$,
\begin{equation}\label{eq:taylor1}
u(x \pm z) = u(x) \pm \frac{\partial u(x)}{\partial x_i} z_i + \frac{1}{2} \frac{\partial^2 u(x)}{\partial x_i \partial x_j} z_i z_j + R(u; x, \pm z),
\end{equation}
where
\begin{equation}\label{eq:taylor2}
R(u; x, \pm z) =  \pm \frac{1}{6} \frac{\partial^3 u(x \pm s z)}{\partial x_i \partial x_j \partial x_k } z_i z_j z_k,
\end{equation}
for some $s\in[0,1]$. Here we are adopting the summation convention and the indices run over $1, 2,  \ldots, n$. Substituting~\eqref{eq:taylor1} and \eqref{eq:taylor2} in~\eqref{eq:Ldel_symmetric}, one obtains
\begin{eqnarray}\label{eq:Ldel_taylor1}
\nonumber
\Ldel u(x) &=& \cdel  \int_{B_\delta(0)} \frac{z_i z_j}{\|z\|^\beta}\;dz \; \frac{1}{2} \frac{\partial^2 u(x)}{\partial x_i \partial x_j} \\
&& + \;\cdel  \int_{B_\delta(0)} \frac{z_i z_j z_k}{\|z\|^\beta} \; 
\frac{1}{6}\left( \frac{\partial^3 u(x+s z)}{\partial x_i \partial x_j\partial x_k} -\frac{\partial^3 u(x-s z)}{\partial x_i \partial x_j\partial x_k} \right)\;dz.
\end{eqnarray}
Using~\eqref{eq:cdel-integral} and the symmetry of the integral, it follows that
\[
\cdel  \int_{B_\delta(0)} \frac{z_i z_j}{\|z\|^\beta}\;dz = \
\begin{cases} 1 &\mbox{for } i=j \\ 
0 & \mbox{for } i \ne j 
\end{cases},
\]
and hence the  first term on the right hand side of~\eqref{eq:Ldel_taylor1} reduces to $\Delta u(x)$. Therefore,
\begin{eqnarray}\label{eq:Ldel-Lap}
\nonumber
\left | \Ldel u(x) - \Delta u(x)\right | &=&   \left | \cdel  \int_{B_\delta(0)} \frac{z_i z_j z_k}{\|z\|^\beta} \; 
\frac{1}{6}\left( \frac{\partial^3 u(x+s z)}{\partial x_i \partial x_j\partial x_k} -\frac{\partial^3 u(x-s z)}{\partial x_i \partial x_j\partial x_k} \right)\;dz \right |, \\
&\leq&  \cdel  \int_{B_\delta(0)} \frac{\|z\|^3}{\|z\|^\beta} \;dz \;H_x,
\end{eqnarray}
where
\[
H_x = \max_{\substack{z \in\overline{B_\delta(0)} \\ s\in[0,1]\\i,j,k=1\ldots n}} \frac{1}{6} \left | \frac{\partial^3 u(x+s z)}{\partial x_i \partial x_j\partial x_k} -\frac{\partial^3 u(x-s z)}{\partial x_i \partial x_j\partial x_k}\right |.
\]
Since
\begin{eqnarray*}
\int_{B_\delta(0)} \frac{\|z\|^3}{\|z\|^\beta} \;dz &=& \int_{S^{n-1}}\; d_{S^{n-1}}V\; \int_0^\delta \frac{r^3}{r^\beta} r^{n-1}\;dr,\\
&=& \frac{2 \pi^{n/2}}{\Gamma(\frac{n}{2})}\; \frac{\delta^{3+n-\beta}}{3+n-\beta},
\end{eqnarray*}
and by using ~\eqref{eq:cdel-explicit} and~\eqref{eq:Ldel-Lap}, one obtains
\[
\left | \Ldel u(x) - \Delta u(x)\right | \leq H_x \frac{2 n (2+n-\beta)}{3+n-\beta} \;\delta,
\] 
from which the result follows.
\end{proof}

\section{Asymptotic behavior of the multipliers}\label{sec:asymptotics}

The series representation~\eqref{eq:multiplier-general} immediately provides the asymptotic behavior of $\mdel(\nu)$ for small $\nu$.
\begin{theorem}\label{thm:asymptotic-near-zero}
Let $n\ge 1$, $\delta>0$ and $\beta\in\mathbb{R}\setminus\{n+2,n+4,n+6,\ldots\}$.  Then, as $\|\nu\|\to 0$,
\begin{equation*}
\frac{\mdel(\nu)}{-\|\nu\|^2} = 1-\frac{\delta^2(n+2-\beta)}{4(n+2)(n+4-\beta)}\|\nu\|^2 + O(\|\nu\|^4).
\end{equation*}
\end{theorem}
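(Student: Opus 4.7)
The plan is to read the expansion directly off the hypergeometric representation~\eqref{eq:multiplier-general} by Taylor-expanding $_2F_3$ in $z = -\tfrac{1}{4}\|\nu\|^2\delta^2$. Since the hypergeometric function $_2F_3$ satisfies $p = 2 \le 3 = q$, its defining series converges on all of $\mathbb{C}$, so for each fixed $\delta > 0$ and each admissible $\beta$ the map $\nu \mapsto {}_2F_3\!\left(1,\tfrac{n+2-\beta}{2};2,\tfrac{n+2}{2},\tfrac{n+4-\beta}{2};-\tfrac{1}{4}\|\nu\|^2\delta^2\right)$ is an entire function of $\|\nu\|^2$. Truncating its convergent power series after the linear term therefore leaves a remainder that is $O(\|\nu\|^4)$ as $\|\nu\| \to 0$, uniformly on compact neighbourhoods of the origin.

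The core step is to compute the $k = 0$ and $k = 1$ coefficients of the series $\sum_{k \ge 0} \frac{(1)_k \left(\frac{n+2-\beta}{2}\right)_k}{(2)_k \left(\frac{n+2}{2}\right)_k \left(\frac{n+4-\beta}{2}\right)_k} \frac{z^k}{k!}$. The constant term is $1$, and the linear coefficient evaluates as
\[
\frac{(1)_1 \left(\frac{n+2-\beta}{2}\right)_1}{(2)_1 \left(\frac{n+2}{2}\right)_1 \left(\frac{n+4-\beta}{2}\right)_1} \;=\; \frac{\frac{n+2-\beta}{2}}{2 \cdot \frac{n+2}{2} \cdot \frac{n+4-\beta}{2}} \;=\; \frac{n+2-\beta}{(n+2)(n+4-\beta)}.
\]
Multiplying by $z = -\tfrac{1}{4}\|\nu\|^2\delta^2$ and adding the constant term gives
\[
{}_2F_3\!\left(1,\tfrac{n+2-\beta}{2};2,\tfrac{n+2}{2},\tfrac{n+4-\beta}{2};-\tfrac{1}{4}\|\nu\|^2\delta^2\right) = 1 - \frac{\delta^2(n+2-\beta)}{4(n+2)(n+4-\beta)}\|\nu\|^2 + O(\|\nu\|^4),
\]
and dividing~\eqref{eq:multiplier-general} through by $-\|\nu\|^2$ produces the claimed expansion.

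There is no genuine obstacle; the argument is a two-line reading of Pochhammer coefficients combined with the entire-function property of $_2F_3$. The only bookkeeping point is the exclusion $\beta \notin \{n+2, n+4, n+6, \ldots\}$: these are precisely the parameter values at which $(n+4-\beta)/2$ is a nonpositive integer (so that the denominator Pochhammer symbol $\left(\frac{n+4-\beta}{2}\right)_k$ eventually vanishes and the series representation of $_2F_3$ fails to define), together with $\beta = n+2$, which is omitted because the proposed linear correction vanishes and the identity $\mdel(\nu) = -\|\nu\|^2$ from Corollary~\ref{cor:delta-beta} already gives a stronger conclusion than the stated asymptotic.
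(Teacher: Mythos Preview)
Your proof is correct and follows the same approach as the paper's: both read the expansion directly from the hypergeometric series representation~\eqref{eq:multiplier-general} by computing the first two Taylor coefficients of $_2F_3$ in $-\tfrac{1}{4}\|\nu\|^2\delta^2$. The paper's version is a single sentence invoking the series representation, while yours spells out the Pochhammer arithmetic and the entire-function justification for the $O(\|\nu\|^4)$ remainder explicitly.
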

\begin{proof}
This is an immediate consequence of the series representation:
\begin{equation*}
\mdel(\nu) = -\|\nu\|^2\left(1-\frac{\delta^2(n+2-\beta)}{4(n+2)(n+4-\beta)}\|\nu\|^2 + O(\|\nu\|^4)\right).
\end{equation*}
\end{proof}

In order to understand the behavior of $\mdel(\nu)$ for large $\|\nu\|$, we approximate $_2F_3(1,a;2,b,a+1;-z^2)$ for large $z=\|\nu\|\delta/2>0$, where $a = (n+2-\beta)/2$ and $b = (n+2)/2$. We establish the asymptotic behavior $\mdel(\nu)$ for large $\|\nu\|$ using formulas from the NIST DLMS~\cite{NIST:DLMF}.

\begin{theorem}\label{thm:asymptotics}
Let $n\ge 1$, $\delta>0$ and $\beta\in\mathbb{R}\setminus\{n+2,n+4,n+6,\ldots\}$.  Then, as $\|\nu\|\to\infty$,
\begin{equation*}
\mdel(\nu) \sim 
\begin{cases}
-\frac{2n(n+2-\beta)}{\delta^2(n-\beta)}
+ 2\left(\frac{2}{\delta}\right)^{n+2-\beta}
\frac{\Gamma\left(\frac{n+4-\beta}{2}\right)\Gamma\left(\frac{n+2}{2}\right)}{(n-\beta)\Gamma\left(\frac{\beta}{2}\right)}\|\nu\|^{\beta-n}
&\text{if $\beta\ne n$},\\
-\frac{2n}{\delta^2}\left(
2\log\|\nu\|+
\log\left(\frac{\delta^2}{4}\right)+\gamma-\psi(\frac{n}{2})\right)
&\text{if $\beta = n$},
\end{cases}
\end{equation*}
where $\gamma$ is Euler's constant and $\psi$ is the digamma function.
\end{theorem}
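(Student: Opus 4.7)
The plan is to apply the large-argument asymptotic expansion of the
generalized hypergeometric function ${}_2F_3$ catalogued in
\cite[\S 16.11]{NIST:DLMF} to the representation~\eqref{eq:multiplier-general}.
Setting $a=(n+2-\beta)/2$, $b=(n+2)/2$, and $z=\|\nu\|\delta/2$, the multiplier
reads
\begin{equation*}
\mdel(\nu) = -\frac{4z^{2}}{\delta^{2}}\,
{}_2F_3\!\left(1,a;2,b,a+1;-z^{2}\right),
\end{equation*}
so the problem reduces to extracting the large-$z$ behaviour of ${}_2F_3$.
For $p=2$, $q=3$ the DLMF expansion decomposes ${}_2F_3$ into an algebraic
piece $H_{2,3}$ plus an oscillatory hyperexponential remainder $E_{2,3}$.
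A direct scaling count shows that $-\|\nu\|^{2}E_{2,3}(-z^{2})$ decays
like $\|\nu\|^{-(n+1)/2}$, so it is strictly dominated by the algebraic
contributions in every case and can be absorbed into the $\sim$ symbol.

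For $\beta\neq n$, the numerator parameters $a_{1}=1$ and $a_{2}=a$ are
distinct, so $H_{2,3}$ supplies two power-law terms of orders $z^{-2}$ and
$z^{-2a}$. I would read off each coefficient from
\begin{equation*}
\frac{\Gamma(b_{1})\Gamma(b_{2})\Gamma(b_{3})}{\Gamma(a_{1})\Gamma(a_{2})}
\sum_{k=1}^{2}
\frac{\Gamma(a_{k})\,\Gamma(a_{3-k}-a_{k})}
     {\Gamma(b_{1}-a_{k})\Gamma(b_{2}-a_{k})\Gamma(b_{3}-a_{k})}\,z^{-2a_{k}},
\end{equation*}
and then multiply by $-4z^{2}/\delta^{2}$. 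Using the identities
$\Gamma(b)/\Gamma(b-1)=n/2$ and $\Gamma(b-a)=\Gamma(\beta/2)$, the $k=1$
contribution collapses to the $\nu$-independent constant
$-2n(n+2-\beta)/(\delta^{2}(n-\beta))$. For $k=2$, the functional equation
$(n+2-\beta)\Gamma(\tfrac{n+2-\beta}{2})=2\Gamma(\tfrac{n+4-\beta}{2})$ and
the substitution $z^{2-2a}=(2/\delta)^{n-\beta}\|\nu\|^{\beta-n}$ produce
exactly the stated $\|\nu\|^{\beta-n}$-term. This single formula covers
both Case~1 ($\beta<n$, where the second summand decays and the constant
dominates) and Case~3 ($\beta>n$, where the constant is dominated by the
growing $\|\nu\|^{\beta-n}$-term).

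The delicate case is $\beta=n$: then $a_{1}=a_{2}=1$, the factor
$\Gamma(a_{3-k}-a_{k})$ becomes singular, and the two power laws coalesce
into a logarithm. Rather than invoking the confluent DLMF expansion
directly, I would pass to the limit $\beta\to n$ in the formula already
obtained for $\beta\neq n$. Writing $\varepsilon=\beta-n\to 0$, both
summands individually blow up like $\varepsilon^{-1}$, but the poles
cancel exactly; the $\varepsilon^{0}$ residue is extracted by first-order
Taylor expansion of
$\|\nu\|^{\beta-n}=1+\varepsilon\log\|\nu\|+O(\varepsilon^{2})$, of the
prefactor $(2/\delta)^{n+2-\beta}=(4/\delta^{2})(2/\delta)^{-\varepsilon}$,
and of the gamma factors $\Gamma((n+4-\beta)/2)$ and $\Gamma(\beta/2)^{-1}$
(whose logarithmic derivatives at $\beta=n$ produce $\psi(2)=1-\gamma$ and
$\psi(n/2)$, respectively). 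Collecting these contributions gives exactly
$-\tfrac{2n}{\delta^{2}}\bigl(2\log\|\nu\|+\log(\delta^{2}/4)+\gamma-\psi(n/2)\bigr)$.
The main technical obstacle is the careful bookkeeping in this limit:
the ``$-1$'' appearing in $\psi(2)=1-\gamma$ must be cancelled exactly by
the ``$+1$'' arising from the $O(\varepsilon)$ expansion of the factor
$(n+2-\beta)$ in the Case~1/3 constant, leaving the clean
digamma/Euler-constant combination. I would cross-check the result against
the intrinsic confluent form of the ${}_2F_3$ asymptotic expansion for
coalescing parameters.
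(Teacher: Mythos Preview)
Your treatment of the $\beta\ne n$ case is essentially the paper's: both invoke the DLMF decomposition ${}_2F_3\sim a\Gamma(b)(H_{2,3}+E_{2,3}+E_{2,3})$ and discard the $E_{2,3}$ pieces as lower order. The paper phrases the extraction of $H_{2,3}$ via residues of the simplified Mellin--Barnes integrand
\[
f(s)=\frac{\Gamma(-s)}{(s+1)(s+a)\Gamma(b+s)}\,z^{2s},
\]
noting that the special parameter structure collapses the formal series for $H_{2,3}$ to exactly the two residues at $s=-1$ and $s=-a$; your explicit sum over the two numerator parameters is the same computation in different clothing.

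The genuine difference is at $\beta=n$. The paper computes the residue at the \emph{double} pole $s=-1$ directly,
\[
\Res(f,-1)=\left.\frac{d}{ds}\frac{\Gamma(-s)}{\Gamma(b+s)}z^{2s}\right|_{s=-1}
=\frac{2\log z-\psi(b-1)-\psi(1)}{\Gamma(b-1)}\,z^{-2},
\]
which yields the logarithmic term in one stroke. You instead let $\beta\to n$ in the $\beta\ne n$ formula and extract the finite part after the $1/(n-\beta)$ poles cancel. Your route gives the right answer, but as written it has a gap: the relation $\mdel(\nu)\sim A(\beta)+B(\beta)\|\nu\|^{\beta-n}$ is a limit in $\|\nu\|$ for each \emph{fixed} $\beta$, and you are interchanging it with the limit $\beta\to n$. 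To make this rigorous you must argue that the error is uniform in $\beta$ near $n$; here that is true because $H_{2,3}(z^{2})$ is the contour integral of $f(s)$ around both poles and hence analytic in $a$, while the $E_{2,3}$ contribution depends only on $b$ and is independent of $\beta$. If you add that sentence, your limit argument is complete; the paper's double-pole residue simply sidesteps the issue and is a bit cleaner bookkeeping-wise (no need to track the cancellation of the ``$-1$'' from $\psi(2)$ against the ``$+1$'' from expanding $n+2-\beta$).
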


\begin{proof}
Specialized to the present case,~\cite[Eq.~(16.11.8)]{NIST:DLMF} states that for large $|z|$
\begin{equation*}
_2F_3(1,a;2,b,a+1;-z^2) 
\sim a\Gamma(b)\left(H_{2,3}(z^2) + E_{2,3}(z^2e^{-i\pi}) + E_{2,3}(z^2e^{i\pi})\right),
\end{equation*}
where $E_{2,3}$ and $H_{2,3}$ are formal series defined in~\cite[Eq.~(16.11.1)]{NIST:DLMF} and~\cite[Eq.~(16.11.2)]{NIST:DLMF} respectively.  Again, specializing to the present setting yields
\begin{equation*}
E_{2,3}(z^2e^{\pm i\pi}) = (2\pi)^{-1/2}2^{b+1}e^{\pm 2iz}
(\pm 2iz)^{-\frac{2b+3}{2}}.
\end{equation*}
Since $2b=n+2$, the $E_{2,3}$ terms decay asymptotically like $|z|^{-\frac{n+5}{2}}$, and do not contribute the the asymptotic behavior described in the theorem.

Perhaps the simplest way to obtain the asymptotic behavior of the term involving $H_{2,3}$ is through the remark below~\cite[Eq.~(16.11.5)]{NIST:DLMF}, which states that $H_{2,3}$ can be recognized as the sum of the residues of certain poles of the integrand in~\cite[Eq.~(16.5.1)]{NIST:DLMF}.  Although, in the general case, there are infinitely many poles to consider, the particular configuration of the parameters under consideration provides a simplification of the integrand (keeping in mind we are evaluating $_2F_3$ at $-z^2$):
\begin{equation*}
f(s) := \frac{\Gamma(1+s)\Gamma(a+s)}{\Gamma(2+s)\Gamma(b+s)\Gamma(a+1+s)}
\Gamma(-s)(z^2)^s = \frac{\Gamma(-s)}{(s+1)(s+a)\Gamma(b+s)}z^{2s}.
\end{equation*}
The two poles of interest are at $s=-1$ and $s=-a$.  The restriction that $\beta\notin\{n+2,n+4,n+6,\ldots\}$ ensures that $-a$ is not a nonnegative integer and, therefore, that $s=-a$ is not a pole of $\Gamma(-s)$.  Thus, there are only two cases to consider, either $\beta\ne n$, which implies that $s=-1$ and $s=-a$ are distinct simple poles of $f$, or $\beta=n$, yielding a double pole at $s=-1$.

In the first case, if $\beta\ne n$, we have
\begin{equation*}
H_{2,3}(z^2) = \Res(f,-1) + \Res(f,-a)
= \frac{1}{(a-1)\Gamma(b-1)}z^{-2} + \frac{\Gamma(a)}{(1-a)\Gamma(b-a)}z^{-2a}.
\end{equation*}
Recalling~\eqref{eq:multiplier-general} and substituting $z=\|\nu\|\delta/2$, $a=(n+2-\beta)/2$ and $b=(n+2)/2$ yields the $\beta\ne n$ case of the theorem.

When $\beta=n$, the double pole makes the residue more complicated (and gives rise to the logarithmic term):
\begin{equation*}
\Res(f,-1) = \left.\frac{d}{ds}\left(\frac{\Gamma(-s)}{\Gamma(b+s)}z^{2s}\right)\right|_{s=-1} = \frac{2\log z-\psi(b-1)-\psi(1)}{\Gamma(b-1)}z^{-2}.
\end{equation*}
Again using~\eqref{eq:multiplier-general} and substituting yields the $\beta=n$ case.
\end{proof}

\begin{figure}
\centering
\includegraphics[width=0.95\textwidth]{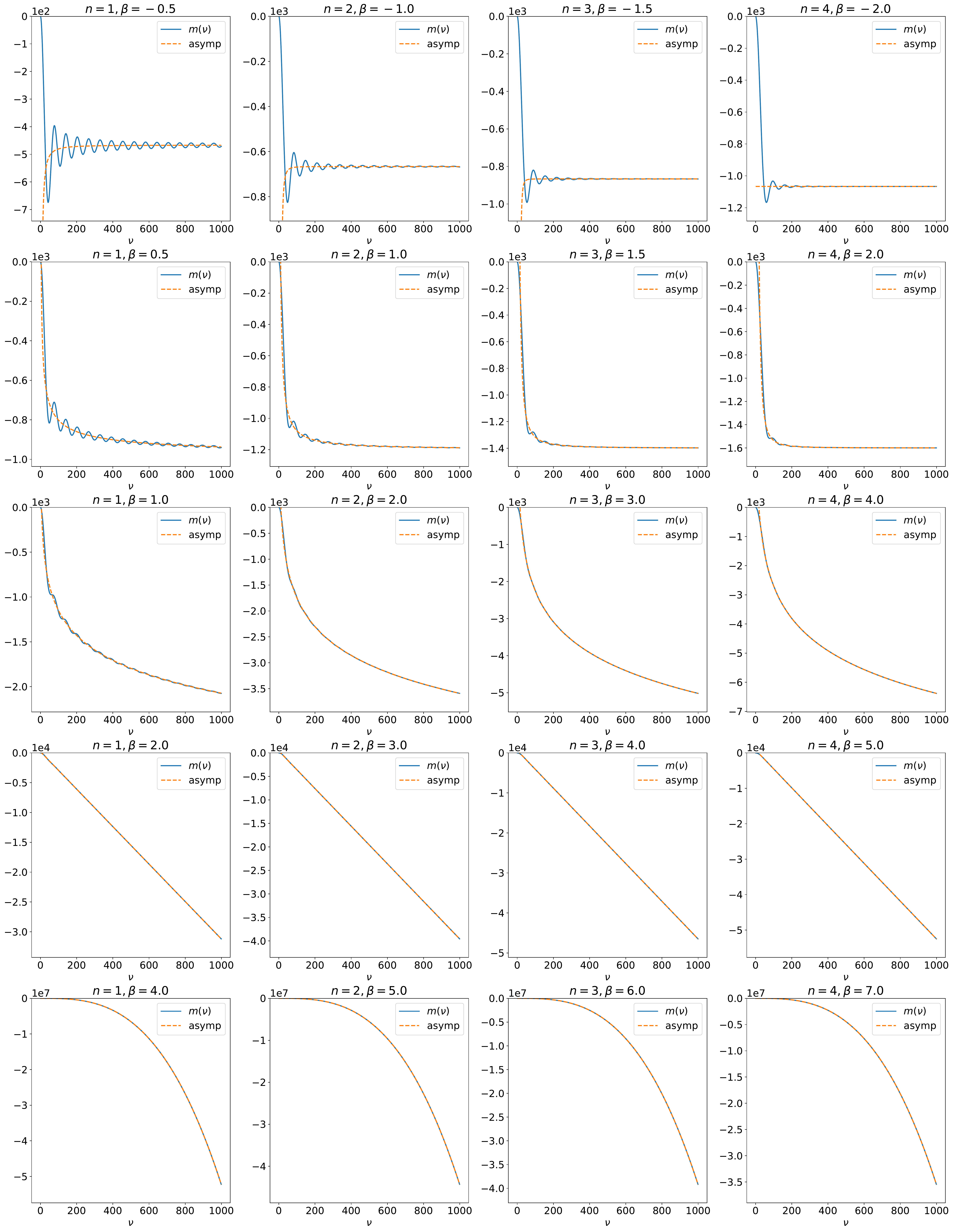}
\caption{Fourier multipliers $\mdel(\nu)$ with $\|\nu\|$ sampled at $1000$ equispaced points in the interval $[1,318\pi]$ and $\delta=0.1$. The values of $n$ and $\beta$ are displayed above each plot.  The dashed lines show the asymptotic approximation from Theorem~\ref{thm:asymptotics}.}
\label{fig:asymptotics-1}
\end{figure}

Figure~\ref{fig:asymptotics-1} shows a comparison between the actual multipliers and the asymptotic approximation in Theorem~\ref{thm:asymptotics} for various cases.  The values of the multipliers were evaluated using the implementation of the $_2F_3$ hypergeometric functions that are provided by mpmath~\cite{mpmath}, a Python library for arbitrary precision arithmetic.  The fact that the solid curves (computed multipliers) approach the dashed curves (asymptotic behavior given in Theorem~\ref{thm:asymptotics}) as $\nu$ grows large in all cases demonstrates the expected asymptotic behavior of the multipliers.  In the first two rows of plots, $\beta<n$, so Theorem~\ref{thm:asymptotics} implies that the multipliers are bounded.  The third row of plots corresponds to $\beta=n$, for which the theorem predicts logarithmic growth in the magnitudes of the multipliers.  In the fourth row, $\beta=n+1$ and the theorem predicts asymptotic linear growth.  In the final row, $\beta=n+3$ (a case of the extension described in Section~\ref{sec:generalized}).  Again, one can observe that the multipliers asymptotically approach the predicted curve, which exhibits cubic growth in this case.

\begin{remark}
In light of Section~\ref{sec:eigenvalues}, Theorem~\ref{thm:asymptotics} provides a generalization to the key estimates within the proof of~\cite[Lemma~3]{du2016asymptotically}.  Rewriting  these estimates in the notation of the present paper yields the following statement.  In the 1D case ($n=1$), there exist positive constants $C_1(\delta),C_2(\delta),\ldots,C_6(\delta)$ such for sufficiently large $|\nu|$,
\begin{equation*}
\begin{cases}
C_1(\delta) \le |\mdel(\nu)| \le C_2(\delta) &\text{if $0<\beta<1$}, \\
C_3(\delta)|\nu|^{\beta-1} \le |\mdel(\nu)| \le C_4(\delta)|\nu|^{\beta-1}
&\text{if $1<\beta<3$},\\
C_5(\delta)\log(|\nu|) \le |\mdel(\nu)| \le C_6(\delta)\log(|\nu|)
&\text{if $\beta=1$}.
\end{cases}
\end{equation*}
Theorem~\ref{thm:asymptotics} shows that similar estimates exist in arbitrary spatial dimension and, moreover, provides explicit coefficients for the asymptotic behavior as $\|\nu\|\to\infty$ including in cases not traditionally considered ($\beta\ge n+2$).
\end{remark}

The asymptotic behavior of the limit operator $\Ldelinf$, defined through its multipliers in~\eqref{eq:m-infinity} can be given a more explicit form.
\begin{theorem}\label{thm:big-beta-asymptotics}
Let $n\ge 1$ and $\delta>0$.  Then,
\begin{equation}\label{eq:asysmtotics-near-zero-big-beta}
\frac{\mdelinf(\nu)}{-\|\nu\|^2} = 1 - \frac{\delta}{4(n+2)}\|\nu\|^2 + O(\|\nu\|^4)
\end{equation}
and
\begin{equation}\label{eq:asymptotics-big-beta}
\begin{split}
\mdelinf(\nu) =
    \left(\frac{2}{\delta}\right)^{(n+3)/2}
    \frac{\Gamma\left(\frac{n}{2}+1\right)}{\pi^{1/2}}
    \cos\left(
\|\nu\|\delta-\frac{n-1}{4}\pi
\right)\|\nu\|^{-(n-1)/2}\textbf{}
    \\-\frac{2n}{\delta^2}
   + O(\|\nu\|^{-(n+2)/2}).
\end{split}
\end{equation}
\end{theorem}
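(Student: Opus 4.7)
The statement has two parts: a Taylor-type expansion of $\mdelinf(\nu)/(-\|\nu\|^2)$ near $\|\nu\|=0$, and an oscillatory asymptotic as $\|\nu\|\to\infty$. Theorem~\ref{thm:big-beta-mult} gives two equivalent representations of $\mdelinf(\nu)$---a generalized hypergeometric series and a Bessel-function expression---and each part naturally uses one of them. The plan is to expand the ${}_1F_2$ series for the near-zero statement and to exploit the Bessel representation for the far-field statement.

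For~\eqref{eq:asysmtotics-near-zero-big-beta}, I would write
\[
{}_1F_2\left(1;2,\frac{n+2}{2};z\right)
= \sum_{k=0}^\infty \frac{(1)_k}{(2)_k\,\left(\frac{n+2}{2}\right)_k}\frac{z^k}{k!}
= 1 + \frac{z}{n+2} + O(z^2),
\]
using the Pochhammer values $(1)_1=1$, $(2)_1=2$, $\left(\frac{n+2}{2}\right)_1 = \frac{n+2}{2}$. Substituting $z = -\|\nu\|^2\delta^2/4$ into the hypergeometric formula of Theorem~\ref{thm:big-beta-mult} and dividing by $-\|\nu\|^2$ yields the stated expansion, with the tail of the ${}_1F_2$ series controlled uniformly on compact sets by $O(\|\nu\|^4)$.

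For~\eqref{eq:asymptotics-big-beta}, I would start from the Bessel form in Theorem~\ref{thm:big-beta-mult}, rewritten as
\[
\mdelinf(\nu) = \frac{4\Gamma\left(\frac{n}{2}+1\right)}{\delta^2}\,\frac{J_{(n-2)/2}(\|\nu\|\delta)}{\left(\frac{1}{2}\|\nu\|\delta\right)^{(n-2)/2}} - \frac{2n}{\delta^2},
\]
where the constant $-2n/\delta^2$ arises from $-4\Gamma\!\left(\frac{n}{2}+1\right)/(\delta^2\Gamma\!\left(\frac{n}{2}\right))$ via the identity $\Gamma\!\left(\frac{n}{2}+1\right) = \frac{n}{2}\,\Gamma\!\left(\frac{n}{2}\right)$. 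I would then insert the classical large-argument Bessel asymptotic
\[
J_\alpha(x) = \sqrt{\frac{2}{\pi x}}\,\cos\!\left(x - \frac{\alpha\pi}{2} - \frac{\pi}{4}\right) + O(x^{-3/2})
\]
with $\alpha = (n-2)/2$ and $x = \|\nu\|\delta$, simplify the phase to $\|\nu\|\delta - (n-1)\pi/4$, and collect the powers of $2$ using $4\cdot 2^{(n-1)/2} = 2^{(n+3)/2}$. This reproduces the claimed oscillatory coefficient $\left(2/\delta\right)^{(n+3)/2}\Gamma\!\left(\frac{n}{2}+1\right)/\pi^{1/2}$ and the factor $\|\nu\|^{-(n-1)/2}$.

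The one subtle point is matching the error exponent. A naive use of the one-term Bessel remainder $O(x^{-3/2})$ only delivers a remainder of size $O(\|\nu\|^{-(n+1)/2})$ after dividing by $(\|\nu\|\delta/2)^{(n-2)/2}$, so to reach the sharper $O(\|\nu\|^{-(n+2)/2})$ in the statement I would carry the two-term Bessel expansion explicitly. The subleading correction is a term of the form $-\sqrt{2/(\pi x)}\,\frac{4\alpha^2-1}{8x}\sin(x - \alpha\pi/2 - \pi/4)$ with coefficient $4\alpha^2-1 = (n-1)(n-3)$, and the remainder past that is $O(x^{-5/2})$; showing that this sine contribution can be combined with the $O(x^{-5/2})$ tail to give the claimed rate (and incorporating the resulting constant correction into $-2n/\delta^2$ if needed) is the main piece of bookkeeping. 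Once that is settled, the proof is essentially a direct computation from the two representations of $\mdelinf(\nu)$ guaranteed by Theorem~\ref{thm:big-beta-mult}.
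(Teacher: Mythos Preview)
Your approach is essentially identical to the paper's: the paper obtains~\eqref{eq:asysmtotics-near-zero-big-beta} from the ${}_1F_2$ series and obtains~\eqref{eq:asymptotics-big-beta} by substituting the one-term Bessel asymptotic $J_{(n-2)/2}(x)=\sqrt{2/(\pi x)}\cos\bigl(x-(n-1)\pi/4\bigr)+O(x^{-3/2})$ directly into the Bessel representation of Theorem~\ref{thm:big-beta-mult}, with no further discussion. You are in fact more careful than the paper on the remainder, since the paper simply declares~\eqref{eq:asymptotics-big-beta} after the substitution and does not address the exponent mismatch you flag.

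That said, your proposed fix for the mismatch cannot succeed. The subleading Bessel term
\[
-\sqrt{\frac{2}{\pi x}}\,\frac{4\alpha^2-1}{8x}\,\sin\!\left(x-\frac{\alpha\pi}{2}-\frac{\pi}{4}\right),\qquad 4\alpha^2-1=(n-1)(n-3),
\]
is a genuine oscillatory contribution of exact order $x^{-3/2}$ whenever $n\notin\{1,3\}$; it neither collapses to a constant nor combines with the $O(x^{-5/2})$ tail to yield something smaller. After division by $(\|\nu\|\delta/2)^{(n-2)/2}$ it produces a term of exact order $\|\nu\|^{-(n+1)/2}$, so the one-term expansion already gives the sharpest remainder available in this form. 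The exponent $-(n+2)/2$ in~\eqref{eq:asymptotics-big-beta} appears to be a misprint for $-(n+1)/2$; the paper's own one-line substitution only supports the latter.
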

\begin{proof}
The asymptotic formula~\eqref{eq:asysmtotics-near-zero-big-beta} follows from the series expansion of~\eqref{eq:m-infinity} near $\nu=0$.

From~\cite[Eq.~(9.2.1)]{a_and_s},
\begin{equation*}
J_{(n-2)/2}(\|\nu\|\delta) = 
\sqrt{\frac{2}{\pi\|\nu\|\delta}}
\cos\left(
\|\nu\|\delta-\frac{n-1}{4}\pi
\right) + O(\|\nu\|^{-3/2}).
\end{equation*}
Substituting this into the Bessel function representation in Theorem~\ref{thm:big-beta-mult} yields~\eqref{eq:asymptotics-big-beta}.

\end{proof}

\begin{figure}
\centering
\includegraphics[width=0.95\textwidth]{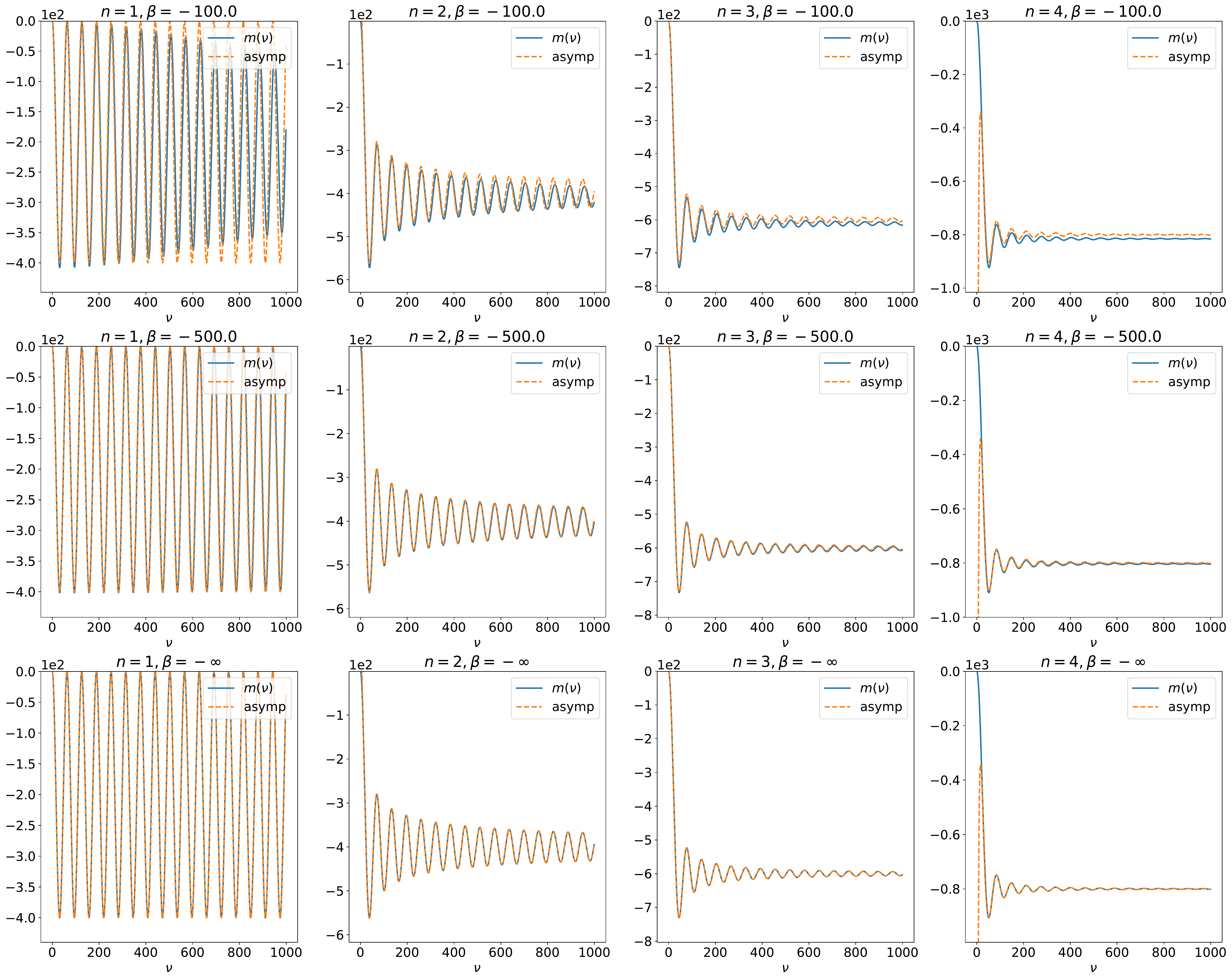}
\caption{Fourier multipliers $\mdel(\nu)$ with $\|\nu\|$ sampled at $1000$ equispaced points in the interval $[1,318\pi]$ and $\delta=0.1$. The values of $n$ and $\beta$ are displayed above each plot.  The dashed lines show the asymptotic approximation of $\mdelinf(\nu)$ from Theorem~\ref{thm:big-beta-asymptotics}.}
\label{fig:asymptotics-2}
\end{figure}

Figure~\ref{fig:asymptotics-2} shows a comparison between the actual multipliers and the asymptotic approximation in Theorem~\ref{thm:big-beta-asymptotics}.  The values of the multipliers were evaluated using the implementation of the $_2F_3$ and $_1F_2$ hypergeometric functions that are provided by mpmath~\cite{mpmath}.  Each row of the figure corresponds to a particular value of $\beta$ ($-100$, $-500$ and $-\infty$ respectively).  The solid curves show the values of the multipliers while the dashed curves show the values of the right-hand side of~\eqref{eq:asymptotics-big-beta} with error terms omitted.  Note that, for $n\ge 2$, Theorem~\ref{thm:big-beta-asymptotics} predicts that $\mdelinf(\nu)\to-\frac{2n}{\delta^2}$ as $\nu\to\infty$.  On the other hand, for finite negative $\beta$ with large magnitude, Theorem~\ref{thm:asymptotics} implies that
\begin{equation*}
\mdel(\nu) \to -\frac{2n}{\delta^2}\cdot\frac{n+2-\beta}{n-\beta} = -\frac{2n}{\delta^2}\left(1 + O\left(\frac{1}{\beta}\right)\right).
\end{equation*}
Thus, for $n\ge 2$, the asymptotic behaviors of $\mdel$ for $\beta=-\infty$ and $-\beta\gg1$ are nearly identical; the multipliers in both cases approach nearly the same constant.  However, when $n=1$, the behaviors are quite different.  For large magnitude negative $\beta$, the multipliers approach a constant near $-\frac{2}{\delta^2}$.  However, for $\beta=-\infty$, the multipliers oscillate with finite amplitude and do not approach a limit as $\|\nu\|\to\infty$.

\section{Periodic analysis}\label{sec:periodic}
In  this section, we present an analog to the analysis presented in Sections~\ref{sec:multipliers} and~\ref{sec:asymptotics}   when $\Ldel$ is treated as an operator on periodic functions. In particular, we show that~\eqref{eq:multiplier-general} provides a representation for the eigenvalues of $\Ldel$ and Theorem~\ref{thm:asymptotics} describes their asymptotic behavior. Moreover, we utilize  Theorems~\ref{thm:multipliers-2F3} and \ref{thm:asymptotics} to prove a regularity result for the peridynamic Poisson equation. Furthermore, we show that the solution of the peridynamic Poisson equation converges to the solution of the classical Poisson equation as $\delta \rightarrow 0^+$.

\subsection{Eigenvalues on periodic domains}\label{sec:eigenvalues}
Consider $\Ldel$ as an operator on the periodic torus
\begin{equation*}
\T^n = \prod_{i=1}^n[0,\ell_i],\qquad\text{with }\ell_i>0,\quad i=1,2,\ldots,n.
\end{equation*}

For any $k \in\mathbb{Z}^n$, define
\begin{eqnarray}\label{eq:nu}
\nu_k&=&(2\pi k_1/ \ell_1,2\pi k_2/ \ell_2,\ldots,2\pi k_n/ \ell_n)^T,\\
\nonumber
\phi_k(x) &=& e^{i\nu_k\cdot x}.
\end{eqnarray}
The functions $\{\phi_k\}_{k\in\mathbb{Z}^n}$ form a complete set in $L^2(\T^n)$.  Moreover,
\begin{equation}\label{eq:phi}
  \Ldel\phi_k(x) =
  \left(
  \cdel\int_{B_\delta(0)}\frac{e^{i\nu_k\cdot z}-1}{\|z\|^\beta}\;dz
  \right)\phi_k(x)
  = m(\nu_k)\phi_k(x),
\end{equation}
implying that $\phi_k$ is an eigenfunction of $\Ldel$ with eigenvalue $m(\nu_k)$.  Thus,~\eqref{eq:multiplier-general} provides an alternative to the integral representation of the eigenvalues in the periodic setting and, 
consequently, Theorem~\ref{thm:asymptotics} describes their asymptotic behavior.

\subsection{Regularity of solutions for the peridynamic Poisson equation}\label{sec:scalar-reg-per}

Consider  the periodic peridynamic Poisson equation $\Ldel u = f$.  For $s\in\mathbb{R}$, let $H^s(\T^n)$ consist of the periodic distributions $g$ on $\mathbf{T}^n$ with the property that
\begin{equation*}
    \sum_{k\in\mathbb{Z}^n}(1+\|k\|^2)^s\|\hat{g}_k\|^2
    < \infty.
\end{equation*}
Using the asymptotic properties of the eigenvalues, we prove the following generalization of~\cite[Lemma~3]{du2016asymptotically}.
\begin{theorem}\label{thm:poisson}
Let $n\ge 1$, $\delta>0$ and $\beta \le n+2$.  If $f\in H^s(\T^n)$ satisfies $\hat{f}_0 = 0$, then there is a unique $u\in H^{s'}(\T^n)$ satisfying $\Ldel u = f$ and $\hat{u}_0=0$, where $s'=s + \max\{0,\beta-n\}$.
\end{theorem}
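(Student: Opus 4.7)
The plan is to construct the solution $u$ by dividing $f$ by the symbol $m$ on the Fourier side, then to estimate the $H^{s'}$ norm of the result using the asymptotic lower bounds on $|m(\nu_k)|$ provided by Theorem~\ref{thm:asymptotics}. Concretely, since the eigenfunctions $\{\phi_k\}_{k\in\mathbb{Z}^n}$ are orthonormal in $L^2(\T^n)$ and since \eqref{eq:phi} gives $\Ldel\phi_k=m(\nu_k)\phi_k$, one is forced to define
\begin{equation*}
\hat{u}_0 = 0,\qquad \hat{u}_k=\frac{\hat{f}_k}{m(\nu_k)}\quad(k\ne 0),
\end{equation*}
and then set $u=\sum_k \hat{u}_k\phi_k$. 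Uniqueness is immediate from this: any $v$ with $\Ldel v=0$ and $\hat{v}_0=0$ satisfies $m(\nu_k)\hat{v}_k=0$ for all $k$, forcing $\hat{v}_k=0$ for $k\ne 0$.

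The first technical step is to verify that $m(\nu_k)\ne 0$ whenever $k\ne 0$. For $\beta<n+2$, I would read this off the integral representation~\eqref{eq:multiplier-cosine}: the integrand $(\cos(\nu\cdot z)-1)/\|z\|^\beta$ is nonpositive and vanishes only on a measure-zero subset of $B_\delta(0)$ whenever $\nu\ne 0$, and $\cdel>0$, so $m(\nu)<0$ strictly. For $\beta=n+2$, Section~\ref{sec:generalized} gives $m(\nu)=-\|\nu\|^2$, which is nonzero for $\nu\ne 0$. Since $\nu_k\ne 0$ whenever $k\ne 0$, division is legitimate.

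The main step is the uniform lower bound on $|m(\nu_k)|$. From Theorem~\ref{thm:asymptotics} one extracts, for some $C_0>0$ and $R>0$ depending only on $n,\delta,\beta$,
\begin{equation*}
|m(\nu)|\ge
\begin{cases}
C_0 & \text{if } \beta\le n,\\
C_0\|\nu\|^{\beta-n} & \text{if } n<\beta\le n+2,
\end{cases}
\qquad \|\nu\|\ge R.
\end{equation*}
The set of lattice frequencies $\{\nu_k:k\in\mathbb{Z}^n\setminus\{0\},\ \|\nu_k\|<R\}$ is finite, and $|m(\nu_k)|>0$ on this set by the previous step, so minimization together with $\|\nu_k\|\asymp\|k\|$ yields a single constant $C>0$ with
\begin{equation*}
|m(\nu_k)|^2\ge C(1+\|k\|^2)^{\max\{0,\beta-n\}}\quad\text{for all }k\ne 0.
\end{equation*}
The logarithmic case $\beta=n$ is absorbed into the bounded case, since a lower bound by $c\log\|\nu\|$ for large $\|\nu\|$ certainly implies boundedness below by a positive constant.

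Once this bound is in hand, the $H^{s'}$ estimate is a one-line calculation: using $s'=s+\max\{0,\beta-n\}$,
\begin{equation*}
\|u\|_{H^{s'}}^2=\sum_{k\ne 0}(1+\|k\|^2)^{s+\max\{0,\beta-n\}}\frac{|\hat{f}_k|^2}{|m(\nu_k)|^2}\le C^{-1}\sum_{k\ne 0}(1+\|k\|^2)^s|\hat{f}_k|^2\le C^{-1}\|f\|_{H^s}^2,
\end{equation*}
proving $u\in H^{s'}(\T^n)$. The only genuinely delicate point is the uniform lower bound on $|m(\nu_k)|$ across all nonzero lattice frequencies, since Theorem~\ref{thm:asymptotics} only controls large $\|\nu\|$; the remedy is the compactness-of-finitely-many-lattice-points argument combined with the strict nonvanishing established from the integral representation.
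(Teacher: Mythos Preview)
Your argument is correct and follows essentially the same route as the paper: construct $u$ on the Fourier side via $\hat{u}_k=\hat{f}_k/m(\nu_k)$, then bound $(1+\|k\|^2)^{s'-s}/|m(\nu_k)|^2$ uniformly in $k\ne 0$ using the large-$\|\nu\|$ asymptotics of Theorem~\ref{thm:asymptotics}. You are in fact slightly more careful than the paper's own proof, which does not explicitly address uniqueness or the strict negativity of $m(\nu_k)$ needed to handle the finitely many lattice points below the asymptotic threshold.
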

\begin{proof}
Let  $f\in  H^s(\T^n)$ be represented through its Fourier series. Define 
\begin{eqnarray}\label{eq:u_k}
\nonumber
\hat{u}_0 &=& 0,\\
\hat{u}_k &=& \frac{1}{m(\nu_k)} \hat{f}_k, \;\mbox{ for }\; k\in \mathbb{Z}^n\setminus \{0\},
\end{eqnarray}
where the $\hat{f}_k$ are the Fourier coefficients of $f$, $\nu_k$ are defined by~\eqref{eq:nu}, and $m$ is given in~\eqref{eq:multiplier-general}. Define 
\[
u(x) := \sum_{k\in\mathbb{Z}^n}\hat{u}_k e^{i \nu_k\cdot x}.
\] 
Then using~\eqref{eq:phi},~\eqref{eq:u_k}, and the fact that $\hat{f}_0=0$, it follows that
\begin{equation*}\label{eq:Ldel-series}
    \Ldel u(x) = \sum_{k\in\mathbb{Z}^n}m(\nu_k)\hat{u}_k e^{i \nu_k\cdot x}=f(x).
\end{equation*}
It remains to show that $u\in H^{s'}(\T^n)$. We observe that
\begin{equation}\label{eq:Hs1}
\sum_{0\neq k\in \mathbb{Z}^n}(1+\|k\|^2)^{s'}\|\hat{u}_k\|^2 = \sum_{0\neq k\in \mathbb{Z}^n}\frac{(1+\|k\|^2)^{s'-s}}{|m(\nu_k)|^2}\;(1+\|k\|^2)^{s} \|\hat{f}_k\|^2.
\end{equation}
From \eqref{eq:Hs1} and since $f\in H^s(\T^n)$, the result follows by showing that 
\[
\frac{(1+\|k\|^2)^{s'-s}}{|m(\nu_k)|^2}
\]
is bounded for $k\neq 0$. To see this, we consider two cases. 
First, for $\beta\leq  n$, then $s'-s=0$ in this case, and by using Theorem~\ref{thm:asymptotics}, there exist $r_1>0$ and $C_1>0$ such that $|m(\nu_k)|\ge C_1$, for all $\|k\|\ge r_1$. This implies that 
\[
\frac{(1+\|k\|^2)^{s'-s}}{|m(\nu_k)|^2} \leq \frac{1}{C_1^2}.
\]
Similarly, for $\beta>n$, then $s'-s=\beta-n$, and by using Theorem~\ref{thm:asymptotics}, there exist $r_2>0$ and $C_2>0$ such that $|m(\nu_k)|\ge C_2 \|k\|^{\beta-n}$, for all $\|k\|\ge r_2$. This implies that 
\[
\frac{(1+\|k\|^2)^{s'-s}}{|m(\nu_k)|^2} \leq \frac{1}{C_2^2} \left(\frac{1+\|k\|^2}{\|k\|^2}\right)^{\beta-n},
\]
which is bounded, completing the proof.
\end{proof}

Next we provide a result on the convergence of solutions of the nonlocal problem to the solution of the local problem.

\begin{lemma}\label{lem:reciprocals-bounded}
Let $n\ge 1$ and $\beta\le n+2$.  Define $\theta=\max\{0,\beta-n\}$.  Then there exists a constant $C>0$ such that
\begin{equation*}
\|\nu\|^\theta\left|\frac{1}{\mdel(\nu)}-\frac{1}{-\|\nu\|^2}\right| \le C,\qquad\text{for all $\nu\ne 0$ and all $\delta\in(0,1]$}.
\end{equation*}
\end{lemma}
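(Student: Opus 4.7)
The first step is to separate the $\delta$-dependence from the spectral variable. By Theorem~\ref{thm:multipliers-2F3}, write $\mdel(\nu) = -\|\nu\|^2\, F(\|\nu\|\delta/2)$, where
\begin{equation*}
F(z) := {}_2F_3\!\left(1,\tfrac{n+2-\beta}{2};\,2,\tfrac{n+2}{2},\tfrac{n+4-\beta}{2};\,-z^2\right)
\end{equation*}
depends only on $n$ and $\beta$. The case $\beta = n+2$ is trivial, since then $F \equiv 1$ and the difference vanishes identically. For $\beta < n+2$, the integral representation~\eqref{eq:multiplier-cosine} shows that $\mdel(\nu) < 0$ for $\nu\ne 0$, hence $F(z) > 0$ for all $z\ge 0$, and $F$ is continuous with $F(0)=1$. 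Setting $\mu := \|\nu\|\delta$, a direct algebraic manipulation yields
\begin{equation*}
\|\nu\|^\theta\left|\frac{1}{\mdel(\nu)} - \frac{1}{-\|\nu\|^2}\right|
= \frac{\delta^{2-\theta}}{\mu^{2-\theta}}\cdot\frac{|F(\mu/2)-1|}{F(\mu/2)}.
\end{equation*}
Since $\beta \le n+2$ forces $\theta \in [0,2]$ and $\delta \le 1$, one has $\delta^{2-\theta}\le 1$. It therefore suffices to prove that $g(\mu) := \mu^{-(2-\theta)}\,|F(\mu/2)-1|/F(\mu/2)$ is bounded on $(0,\infty)$.

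Because $F$ is continuous and strictly positive on $[0,\infty)$, $g$ is continuous on $(0,\infty)$, so it is enough to control the limits $\mu\to 0^+$ and $\mu\to\infty$. Near zero, Theorem~\ref{thm:asymptotic-near-zero} gives $1 - F(\mu/2) = c_{n,\beta}\,\mu^2 + O(\mu^4)$, so $g(\mu) = O(\mu^\theta)$, which is bounded since $\theta \ge 0$. For $\mu\to\infty$ I would apply Theorem~\ref{thm:asymptotics} case by case, first observing that substituting $\|\nu\| = \mu/\delta$ into each asymptotic formula causes every occurrence of $\delta$ to combine with $\|\nu\|$ into the single variable $\mu$. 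When $\beta < n$ (so $\theta = 0$), one obtains $F(\mu/2) \sim C_1\mu^{-2}$, giving $g(\mu)\to 1/C_1$; when $\beta = n$ (still $\theta = 0$), $F(\mu/2) \sim C_2\mu^{-2}\log\mu$, giving $g(\mu) = O(1/\log\mu)\to 0$; when $n < \beta < n+2$ (so $\theta = \beta - n$), $F(\mu/2) \sim C_3\mu^{-(n+2-\beta)} = C_3\mu^{-(2-\theta)}$, and the powers of $\mu$ cancel exactly, giving $g(\mu)\to 1/C_3$. In each case the leading coefficient is strictly positive, so the limits of $g$ at both endpoints are finite, and continuity then delivers the uniform bound.

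The main obstacle is the bookkeeping that reassembles the $\delta$-dependent constants of Theorem~\ref{thm:asymptotics} back into the single scaling variable $\mu$: one must verify that each factor $\delta^{-(n+2-\beta)}$ pairs correctly with a power of $\|\nu\|$ to leave a purely $\mu$-dependent expression, which is the whole point of introducing the auxiliary function $F$. A secondary technical point is ruling out zeros of $F$ on $(0,\infty)$; this follows from the strict negativity of $\mdel$ supplied by the cosine integral~\eqref{eq:multiplier-cosine}, together with the positivity of the leading asymptotic coefficients identified above.
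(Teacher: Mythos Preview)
Your proposal is correct and follows essentially the same strategy as the paper's proof: both exploit the scaling identity $\mdel(\nu)=\delta^{-2}\,\mdelone(\delta\nu)$ (which is exactly your observation that $\mdel(\nu)=-\|\nu\|^2 F(\|\nu\|\delta/2)$) to reduce the uniform-in-$\delta$ bound to a single-variable estimate, and then close that estimate with Theorem~\ref{thm:asymptotic-near-zero} near the origin, Theorem~\ref{thm:asymptotics} at infinity, and nonvanishing of the multiplier in between. The only cosmetic difference is that the paper first treats $\delta=1$ and then rescales, whereas you introduce the variable $\mu=\|\nu\|\delta$ up front; the content is identical.
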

\begin{proof}
We begin with the proof for $\delta=1$.  From Theorem~\ref{thm:asymptotic-near-zero} we have that, near $\nu=0$,
\begin{equation*}
\begin{split}
\|\nu\|^\theta\left(\frac{1}{\mdelone(\nu)}-\frac{1}{-\|\nu\|^2}\right)
&= \frac{\|\nu\|^\theta}{-\|\nu\|^2}\left(\frac{-\|\nu\|^2}{\mdelone(\nu)}-1\right)\\
&= \frac{\|\nu\|^\theta}{-\|\nu\|^2}\left(\frac{n+2-\beta}{4(n+2)(n+4-\beta)}\|\nu\|^2 + O(\|\nu\|^4)\right)
\end{split}
\end{equation*}
showing boundedness near $\nu=0$.  The fact that $\theta\le 2$ combined with the asymptotic formulas in Theorem~\ref{thm:asymptotics} shows that the quantity is also bounded for $\nu$ sufficiently far from $0$.  For intermediate $\nu$, the fact that neither $\|\nu\|$ nor $\mdelone(\nu)$ vanish completes the proof.

Now, suppose that $\delta\in(0,1)$.  From~\eqref{eq:multiplier-general}, we see that
\begin{equation*}
\mdel(\nu) = \frac{1}{\delta^2}\mdelone(\delta\nu).
\end{equation*}
Thus,
\begin{equation*}
\|\nu\|^\theta\left(\frac{1}{\mdel(\nu)}-\frac{1}{-\|\nu\|^2}\right)
= \frac{\delta^2}{\delta^\theta}\|\delta\nu\|^\theta\left(\frac{1}{\mdelone(\delta\nu)}-\frac{1}{-\|\delta\nu\|^2}\right).
\end{equation*}
The result follows from the $\delta=1$ case and the fact that $\theta\le 2$.
\end{proof}
\begin{theorem}\label{thm:convergence_solutions}
Let $n\ge 1$ and $\beta \le n+2$. Suppose $f\in H^s(\T^n)$ satisfies $\hat{f}_0= 0$.  Let $u\in H^{s+2}(\T^n)$  be the solution of  the Poisson equation $\Delta u =f$, with $\hat{u}_0=0$, and for any $\delta>0$, let $\udel$ be the solution of the nonlocal Poisson equation $\Ldel \udel=f$ defined in $\T^n$, with $\hatudel_0 = 0$. Then
\begin{equation}\label{eq:limit_delta_1}
\lim_{\delta\rightarrow 0^+}\udel=u, \;\;\mbox{in } \;\;  H^{s'}(\T^n),
\end{equation}
where $s'=s + \max\{0,\beta-n\}$.
\end{theorem}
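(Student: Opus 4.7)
The plan is to work on the Fourier side, exploiting the fact that both $u$ and $\udel$ have explicit Fourier coefficients in terms of $\hat{f}_k$ together with the (local or nonlocal) multipliers. Since $\hat{u}_0=\hatudel_0=0$ and the equations $\Delta u = f$, $\Ldel\udel = f$ transfer through the basis $\{\phi_k\}$ (with eigenvalues $-\|\nu_k\|^2$ and $\mdel(\nu_k)$ respectively, by~\eqref{eq:phi}), I would write, for every $k\in\mathbb{Z}^n\setminus\{0\}$,
\begin{equation*}
\hatudel_k-\hat{u}_k = \hat{f}_k\left(\frac{1}{\mdel(\nu_k)}-\frac{1}{-\|\nu_k\|^2}\right).
\end{equation*}
Squaring and summing then gives
\begin{equation*}
\|\udel-u\|_{H^{s'}(\T^n)}^2
= \sum_{0\ne k\in\mathbb{Z}^n}(1+\|k\|^2)^{s'}\,|\hat{f}_k|^2\left|\frac{1}{\mdel(\nu_k)}-\frac{1}{-\|\nu_k\|^2}\right|^2.
\end{equation*}

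The strategy is then to apply the dominated convergence theorem for series in $\delta$. For the domination, I invoke Lemma~\ref{lem:reciprocals-bounded} with $\theta = \max\{0,\beta-n\} = s'-s$: for all $\delta\in(0,1]$ and $k\ne 0$,
\begin{equation*}
\left|\frac{1}{\mdel(\nu_k)}-\frac{1}{-\|\nu_k\|^2}\right|^2 \le \frac{C^2}{\|\nu_k\|^{2\theta}}.
\end{equation*}
Since $\|\nu_k\|\asymp \|k\|$ for $k\ne 0$ (with constants depending only on the torus side lengths $\ell_i$), the factor $(1+\|k\|^2)^{s'}/\|\nu_k\|^{2\theta}$ is bounded above by $C'(1+\|k\|^2)^{s'-\theta} = C'(1+\|k\|^2)^{s}$. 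Hence each summand is dominated, uniformly in $\delta\in(0,1]$, by $C''(1+\|k\|^2)^{s}|\hat{f}_k|^2$, whose sum is $C''\|f\|_{H^s(\T^n)}^2 < \infty$.

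For the pointwise-in-$k$ convergence, Corollary~\ref{cor:delta-beta} yields $\mdel(\nu_k)\to -\|\nu_k\|^2$ as $\delta\to 0^+$ for every fixed $k\ne 0$, so each summand tends to $0$. Dominated convergence for the counting measure on $\mathbb{Z}^n\setminus\{0\}$ then gives $\|\udel-u\|_{H^{s'}(\T^n)}\to 0$, which is~\eqref{eq:limit_delta_1}.

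The main obstacle, as I see it, is nothing deep but rather bookkeeping: one must make sure the weight $(1+\|k\|^2)^{s'}$ with the sharp choice $s'=s+\max\{0,\beta-n\}$ is exactly absorbed by the $\|\nu_k\|^{-2\theta}$ decay coming from Lemma~\ref{lem:reciprocals-bounded}, and that the constant $C$ in that lemma is genuinely uniform for $\delta\in(0,1]$ (which is what the lemma asserts). Once the dominating sequence is correctly identified, the convergence reduces to a clean application of dominated convergence, and the uniqueness/existence of $\udel$ needed to set up $\hatudel_k=\hat{f}_k/\mdel(\nu_k)$ is provided by Theorem~\ref{thm:poisson}.
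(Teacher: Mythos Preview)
Your proposal is correct and follows essentially the same approach as the paper: express $\hatudel_k-\hat{u}_k$ via the difference of reciprocal multipliers, invoke Lemma~\ref{lem:reciprocals-bounded} (with $\theta=s'-s$) to obtain a summable dominating sequence uniform in $\delta\in(0,1]$, and then use Corollary~\ref{cor:delta-beta} for the pointwise limit together with dominated convergence on $\mathbb{Z}^n\setminus\{0\}$. The paper's proof is organized identically, only writing the intermediate factor $(1+\|k\|^2)^{s'-s}$ explicitly before applying the lemma.
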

\begin{proof}
Since $\udel$ solves the nonlocal Poisson equation, its Fourier coefficients satisfy
\begin{equation}
\label{udel_fourier}
\hatudel_k = \frac{1}{\mdel(\nu_k)} \hat{f}_k, \;\mbox{ for }\; k\in \mathbb{Z}^n\setminus \{0\},
\end{equation}
and, similarly, the Fourier coefficients of $u$ satisfy
\begin{equation}
\label{u_fourier}
\hat{u}_k = \frac{1}{-\|\nu_k\|^2} \hat{f}_k, \;\mbox{ for }\; k\in \mathbb{Z}^n\setminus \{0\},
\end{equation}
where $\hat{f}_k$ are the Fourier coefficients of $f$.  Moreover, $\hat{u}_0=\hatudel_0 = 0$.

From Theorem~\ref{thm:poisson}, $u,\udel\in H^{s'}(\T^n)$. Using \eqref{udel_fourier} and \eqref{u_fourier} we see that
\begin{equation*}
\begin{split}
\|\udel-u \|^2_{H^{s'}(\T^n)} &= \sum_{0\ne k\in\mathbb{Z}^n}(1+\|k\|^2)^{s'}\|\hatudel_k - \hat{u}_k\|^2 \\
&= \sum_{0\ne k\in\mathbb{Z}^n}(1+\|k\|^2)^{s'} \left| \frac{1}{\mdel(\nu_k)} - \frac{1}{-\|\nu_k\|^2} \right|^2 \|\hat{f}_k\|^2\\
&= \sum_{0\ne k\in\mathbb{Z}^n}(1+\|k\|^2)^{s'-s} \left| \frac{1}{\mdel(\nu_k)} - \frac{1}{-\|\nu_k\|^2} \right|^2 (1+\|k\|^2)^{s}\|\hat{f}_k\|^2
\end{split}
\end{equation*}
Let $\theta=s'-s=\max\{0,\beta-n\}$.  Since $f\in H^s(\mathbf{T}^n)$, we can pass the limit as $\delta\to 0$ inside the summation provided the terms
\begin{equation*}
(1+\|k\|^2)^{\theta} \left| \frac{1}{\mdel(\nu_k)} - \frac{1}{-\|\nu_k\|^2} \right|^2
= \left(\frac{1+\|k\|^2}{\|\nu_k\|^2}\right)^{\theta} \|\nu_k\|^{2\theta}\left| \frac{1}{\mdel(\nu_k)} - \frac{1}{-\|\nu_k\|^2} \right|^2
\end{equation*}
are bounded uniformly for $k\ne 0$ and $\delta\in(0,1]$.  Since $\|\nu_k\|$ is bounded uniformly away from $0$ for $k\in\mathbb{Z}^n\setminus\{0\}$ and $(1+\|k\|)/\|\nu_k\|$ is bounded as $\|k\|\to\infty$, Lemma~\ref{lem:reciprocals-bounded} allows us to pass the limit in $\delta$ inside the summation.  The pointwise convergence results of Corollary~\ref{cor:delta-beta} then complete the proof.
\end{proof}
\begin{lemma}
\label{lem:beta}
Let $\beta'<\beta<n+2$. Then, for all $\nu\ne 0$,
\[
\mdel<m^{\delta,\beta'}.
\]
\end{lemma}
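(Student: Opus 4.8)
The plan is to work directly with the cosine integral representation~\eqref{eq:multiplier-cosine} for the multipliers, since the statement is a pointwise inequality for fixed $\nu\neq 0$ and $\delta>0$. Write
\[
\mdel(\nu) = \cdel\int_{B_\delta(0)}\frac{\cos(\nu\cdot z)-1}{\|z\|^\beta}\;dz,
\]
and note that the integrand $g(z):=\cos(\nu\cdot z)-1$ is $\le 0$ everywhere on $B_\delta(0)$, and is strictly negative on a set of positive measure (since $\nu\neq 0$). The key structural fact is that $\cdel$ is \emph{positive} and that the factor $\|z\|^{-\beta}$ depends monotonically on $\beta$. The subtlety is that $\cdel$ itself depends on $\beta$ through~\eqref{eq:cdel-explicit}, so one cannot simply compare integrands; the cleanest route is to use the normalization~\eqref{eq:cdel-integral} to rewrite each multiplier as a ratio that makes the $\beta$-dependence transparent.

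First I would write, using~\eqref{eq:cdel-integral},
\[
\mdel(\nu) = 2n\,\frac{\displaystyle\int_{B_\delta(0)}\bigl(\cos(\nu\cdot z)-1\bigr)\|z\|^{-\beta}\;dz}{\displaystyle\int_{B_\delta(0)}\|z\|^{2-\beta}\;dz},
\]
so that $\mdel(\nu)<\mdelbp(\nu)$ (with $\beta'<\beta$) becomes equivalent, after clearing the positive denominators, to an inequality of the form
\[
\left(\int_{B_\delta}\!\! g(z)\|z\|^{-\beta}dz\right)\!\left(\int_{B_\delta}\!\!\|z\|^{2-\beta'}dz\right)
< \left(\int_{B_\delta}\!\! g(z)\|z\|^{-\beta'}dz\right)\!\left(\int_{B_\delta}\!\!\|z\|^{2-\beta}dz\right),
\]
where $g\le 0$. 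Passing to spherical coordinates reduces every integral to the angular integral of $g$ times a one-dimensional radial integral; the angular factor $G(r):=\int_{S^{n-1}}(\cos(\|\nu\|r\cos\phi_1)-1)\,d_{S^{n-1}}V$ is $\le 0$ and not identically zero, and by~\eqref{eq:inner-bessel} it is a fixed continuous function of $r$. So the whole inequality becomes a statement about the four radial integrals $\int_0^\delta G(r)r^{n-1-\beta}dr$, $\int_0^\delta r^{n+1-\beta'}dr$, etc. This is now a correlation-type inequality: I would phrase it as saying that for the positive measure $d\mu(r)=-G(r)r^{n-1}dr$ on $(0,\delta)$, the quantity $\bigl(\int r^{-\beta}d\mu\bigr)\big/\bigl(\int r^{2-\beta}d\mu\bigr)$ is \emph{strictly decreasing} in $\beta$ — equivalently, the ratio of the $r^{-\beta}$-moment to the $r^{2-\beta}$-moment — and similarly handle the purely radial normalization integrals, which are explicit powers.

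The cleanest way to finish is probably to differentiate in $\beta$: define $\Phi(\beta):=\mdel(\nu)$ as a function of $\beta$ for fixed $\nu,\delta$ and show $\Phi'(\beta)<0$ on $(-\infty,n+2)$. Differentiating the ratio representation and using $\frac{d}{d\beta}\|z\|^{-\beta}=-\log\|z\|\,\|z\|^{-\beta}$ expresses $\Phi'(\beta)$ as a combination of the form $-\bigl(\langle -\log\|z\|\rangle_{g} - \langle -\log\|z\|\rangle_{w}\bigr)$ times a positive factor, where $\langle\cdot\rangle_{g}$ denotes averaging against the (signed-but-sign-definite) weight $g(z)\|z\|^{-\beta}$ and $\langle\cdot\rangle_w$ against $\|z\|^{2-\beta}$; i.e.\ it reduces to a covariance/Chebyshev-type inequality between the function $-\log\|z\|$ and the function $\|z\|^2$ (or rather between $-\log r$ and $r^2$) against a common positive radial measure. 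Since $r\mapsto -\log r$ and $r\mapsto r^2$ are oppositely monotone on $(0,\delta)$, the Chebyshev sum/integral inequality gives the required strict sign, provided $G$ (hence $d\mu$) is not supported at a single point — which holds because $\nu\neq0$.

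\textbf{Main obstacle.} The delicate point is sign-bookkeeping: $g(z)=\cos(\nu\cdot z)-1\le 0$ is \emph{not} a probability weight, so I must be careful that "averaging against $g\|z\|^{-\beta}$" really is averaging against a \emph{positive} measure (namely $-g\|z\|^{-\beta}$) and that the overall sign of $\Phi'(\beta)$ comes out negative rather than positive after the two minus signs from $g<0$ and from $\frac{d}{d\beta}\|z\|^{-\beta}$. A secondary technical point is justifying differentiation under the integral sign and the interchange with the spherical-coordinate reduction for all $\beta<n+2$, which is routine since $g(z)=O(\|z\|^2)$ near the origin makes every integrand (and its $\log$-weighted $\beta$-derivative) locally integrable. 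If one prefers to avoid calculus entirely, the same conclusion follows from a direct two-point rearrangement: write the desired inequality as $\iint_{B_\delta\times B_\delta}(-g(z))\bigl(r_z^{-\beta}r_y^{2-\beta'}-r_z^{-\beta'}r_y^{2-\beta}\bigr)\,w(y)\,dz\,dy$ for an appropriate positive weight, and observe the bracket has a definite sign after symmetrizing in $(z,y)$ because $(r_z/r_y)^{\beta-\beta'}$ and $(r_z/r_y)^{2}$ are comonotone — I would present whichever of these two packagings turns out shortest.
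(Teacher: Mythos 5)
Your plan is a genuinely different route from the paper's. The paper makes the change of variables $w=z/\delta$, which exhibits
\[
\mdel(\nu)=\frac{2\Gamma\left(\frac{n}{2}+1\right)}{\pi^{n/2}\delta^{2}}\,(n+2-\beta)\int_{B_1(0)}\frac{\cos(\delta\nu\cdot w)-1}{\|w\|^\beta}\,dw,
\]
and then concludes directly from the two monotonicities $n+2-\beta<n+2-\beta'$ and $\|w\|^\beta<\|w\|^{\beta'}$ on $B_1(0)$; it is a short comparison of prefactor and kernel with no calculus in $\beta$, no covariance inequality, and no rearrangement.

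Your argument, however, has a genuine gap right where you flag the ``main obstacle.'' After passing to spherical coordinates, both of your proposed finishers collapse to the same assertion: that the angularly averaged function
\[
H(r):=\frac{1}{r^{2}}\int_{S^{n-1}}\bigl(1-\cos(\|\nu\|r\cos\phi_1)\bigr)\,d_{S^{n-1}}V
\]
is nonincreasing on $(0,\delta)$. In your two-point rearrangement, after factoring out the positive quantity $r_z^{-\beta'}r_y^{-\beta'}r_z^{2}r_y^{2}$, the symmetrized bracket has exactly the sign of $\bigl(r_z^{-(\beta-\beta')}-r_y^{-(\beta-\beta')}\bigr)\bigl(H(r_z)-H(r_y)\bigr)$; in your covariance version the relevant pair is $-\log r$ against $H(r)$, not against $r^2$ as you wrote (the $r^2$ only enters through the normalizing weight $\|z\|^{2-\beta}$, not as the function being correlated). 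Either way, you need $H$ monotone, and it is not: already for $n=1$ one has $H(r)\propto(1-\cos(\|\nu\|r))/(\|\nu\|r)^{2}$, which vanishes at $\|\nu\|r=2\pi$ and is locally increasing just past that point, so $H$ is not monotone on $(0,\delta)$ once $\delta\|\nu\|>2\pi$. Since the Lemma is asserted for every $\nu\neq0$, the Chebyshev/rearrangement step cannot be invoked in the oscillatory regime; a different mechanism is needed there.
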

\begin{proof}
Let $\nu\ne 0$. Then by a change of variables $w=\frac{1}{\delta} z$,  the integral form of the multipliers \eqref{eq:multiplier-cosine} becomes
\begin{eqnarray*}
	\mdel(\nu) &=& \cdel \delta^{n-\beta}\int_{B_1(0)}\frac{\cos(\nu\cdot \delta w)-1}{\|w\|^\beta}\;dw\\
	&=& \frac{2\Gamma\left(\frac{n}{2}+1\right)}
    {\pi^{n/2}\delta^{2}}\; (n+2-\beta)\int_{B_1(0)}\frac{\cos(\nu\cdot \delta w)-1}{\|w\|^\beta}\;dw,
\end{eqnarray*}
where we used \eqref{eq:cdel-explicit} in the last equation. Observing that $n+2-\beta<n+2-\beta'$ and $\|w\|^\beta<\|w\|^{\beta'}$, for $w\in B_1(0)$, it follows that 
\begin{eqnarray*}
	\mdel(\nu) &<& 
	\frac{2\Gamma\left(\frac{n}{2}+1\right)}
    {\pi^{n/2}\delta^{2}}\; (n+2-\beta')\int_{B_1(0)}\frac{\cos(\nu\cdot \delta w)-1}{\|w\|^{\beta'}}\;dw=m^{\delta,\beta'}(\nu).
\end{eqnarray*}

\end{proof}
\begin{theorem}\label{thm:convergence_solutions_beta}
Let $n\ge 1$ and $\delta>0$. Suppose $f\in H^s(\T^n)$ satisfies $\hat{f}_0= 0$.  Let $u\in H^{s+2}(\T^n)$  be the solution of  the Poisson equation $\Delta u =f$, with $\hat{u}_0=0$, and for any $\beta< n+2$, let $\udel$ be the solution of the nonlocal Poisson equation $\Ldel \udel=f$ defined in $\T^n$, with $\hatudel_0 = 0$. Then, for any $0<\epsilon<2$,
\begin{equation}\label{eq:limit_beta}
\lim_{\beta\to(n+2)^{-}}\udel=u, \;\;\mbox{in } \;\;  H^{s'}(\T^n),
\end{equation}
where $s'=s+2-\epsilon$.
\end{theorem}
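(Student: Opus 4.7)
The plan is to mimic the proof of Theorem~\ref{thm:convergence_solutions} by reducing the statement to a dominated-convergence argument on the Fourier series of $\udel - u$, the main technical issue being to produce a bound on the ``symbol quotient'' that is uniform in $\beta$ near $n+2$ rather than in $\delta$ near $0$.

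First I would record the Fourier representations. From Theorem~\ref{thm:poisson} and the hypotheses, $\hatudel_0 = \hat{u}_0 = 0$ and, for $k\in\mathbb{Z}^n\setminus\{0\}$,
\[
\hatudel_k = \frac{\hat{f}_k}{\mdel(\nu_k)}, \qquad \hat{u}_k = \frac{\hat{f}_k}{-\|\nu_k\|^2},
\]
so
\[
\|\udel - u\|^2_{H^{s'}(\T^n)}
= \sum_{0\ne k\in\mathbb{Z}^n}(1+\|k\|^2)^{2-\epsilon}
\left|\frac{1}{\mdel(\nu_k)} - \frac{1}{-\|\nu_k\|^2}\right|^2
(1+\|k\|^2)^{s}\|\hat{f}_k\|^2.
\]
Because $f\in H^s(\T^n)$, the weights $(1+\|k\|^2)^{s}\|\hat{f}_k\|^2$ are summable, so it suffices to show that the factor
\[
A_\beta(k) := (1+\|k\|^2)^{2-\epsilon}\left|\frac{1}{\mdel(\nu_k)} - \frac{1}{-\|\nu_k\|^2}\right|^2
\]
is bounded uniformly in $k\ne 0$ and in $\beta$ on some interval $(\beta_1,n+2)$, and then to invoke the dominated convergence theorem together with the pointwise limit $\mdel(\nu_k)\to -\|\nu_k\|^2$ supplied by Corollary~\ref{cor:delta-beta}.

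The decisive step is the uniform bound on $A_\beta(k)$, and here I would exploit the monotonicity of $\mdel$ in $\beta$ from Lemma~\ref{lem:beta}: choose any $\beta_1 \in (n+2-\epsilon,\, n+2)$. Then for every $\beta\in(\beta_1, n+2)$ and every $\nu\ne 0$, $\mdel(\nu) < m^{\delta,\beta_1}(\nu) < 0$, so
\[
\frac{1}{|\mdel(\nu_k)|} \le \frac{1}{|m^{\delta,\beta_1}(\nu_k)|}.
\]
Since $\beta_1 > n$, Theorem~\ref{thm:asymptotics} yields $|m^{\delta,\beta_1}(\nu_k)| \ge C\|\nu_k\|^{\beta_1 - n}$ for $\|\nu_k\|$ large, while continuity and non-vanishing of $m^{\delta,\beta_1}$ at the (finitely many) small frequencies $\nu_k$ with $\|k\|$ bounded handles the low-frequency range; note $\|\nu_k\|$ is bounded away from $0$ for $k\ne 0$ and $\|\nu_k\|\asymp\|k\|$. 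Combining with $|-\|\nu_k\|^2|^{-1}=\|\nu_k\|^{-2}$ gives
\[
A_\beta(k) \;\le\; C'\|\nu_k\|^{2(2-\epsilon)}\Bigl(\|\nu_k\|^{-2(\beta_1-n)} + \|\nu_k\|^{-4}\Bigr)
\;=\; C'\Bigl(\|\nu_k\|^{-2(\beta_1-n-2+\epsilon)} + \|\nu_k\|^{-2\epsilon}\Bigr),
\]
and both exponents are negative precisely because $\beta_1 > n+2-\epsilon$ and $\epsilon>0$; the resulting bound depends only on $\beta_1$, $\delta$, and the $\ell_i$, and \emph{not} on $\beta$.

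With this uniform majorant established, dominated convergence applies to the Fourier series and Corollary~\ref{cor:delta-beta} gives $A_\beta(k)\to 0$ as $\beta\to(n+2)^-$ for each fixed $k\ne 0$, so $\|\udel - u\|_{H^{s'}(\T^n)}\to 0$. The main obstacle, and the reason the theorem loses $\epsilon$ in the Sobolev exponent, is precisely that Theorem~\ref{thm:asymptotics} gives the reciprocal $1/\mdel(\nu_k)$ only a decay of order $\|\nu_k\|^{-(\beta-n)}$ near $n+2$; to obtain a \emph{uniform}-in-$\beta$ majorant via Lemma~\ref{lem:beta} one must freeze a slightly smaller exponent $\beta_1 < n+2$, which buys at most $\|\nu_k\|^{-(\beta_1-n)}$ of decay and hence forces the strict inequality $s'<s+2$.
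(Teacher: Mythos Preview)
Your proposal is correct and follows essentially the same route as the paper's proof: expand the $H^{s'}$ norm in Fourier coefficients, use the monotonicity Lemma~\ref{lem:beta} to replace $\mdel$ by a fixed comparison multiplier $m^{\delta,\beta'}$, apply the asymptotics of Theorem~\ref{thm:asymptotics} to that fixed multiplier, and finish by dominated convergence together with Corollary~\ref{cor:delta-beta}. The only cosmetic difference is that the paper takes the comparison exponent exactly at $\beta'=n+2-\epsilon$ (so the leading term in the majorant is a constant rather than decaying), whereas you take $\beta_1$ strictly inside $(n+2-\epsilon,n+2)$; both choices yield a uniform bound.
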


\begin{proof}
For a given $0<\epsilon<2$, define $\beta'=n+2-\epsilon>n$.  For any $\beta\ge\beta'$, Theorem~\ref{thm:poisson} implies that $\udel\in H^{s+\beta-n}(\T^n)\subset H^{s'}(\T^n)$.  Moreover, $u\in H^{s+2}(\T^n) \subset H^{s'}(\T^n)$, so we can make sense of the limit statement in~\eqref{eq:limit_beta} for $\beta$ sufficiently close to $n+2$.

Proceeding as in the proof of Theorem~\ref{thm:convergence_solutions}, we have
\begin{equation}\label{eq:convergence-beta-key-est}
\|\udel-u \|^2_{H^{s'}(\T^n)} \le
\sum_{0\ne k\in\mathbb{Z}^n}(1+\|k\|^2)^{s'-s} \left| \frac{1}{\mdel(\nu_k)} - \frac{1}{-\|\nu_k\|^2} \right|^2 (1+\|k\|^2)^{s}\|\hat{f}_k\|^2.
\end{equation}
And, again, since $f\in H^s(\T^n)$, we can pass the limit in $\beta$ inside the sum provided the terms
\begin{equation}\label{eq:converge-beta-term}
(1+\|k\|^2)^{2-\epsilon}\left| \frac{1}{\mdel(\nu_k)} - \frac{1}{-\|\nu_k\|^2} \right|^2
\end{equation}
are uniformly bounded for $k\in\mathbb{Z}^n\setminus\{0\}$ and $\beta\in[\beta',n+2)$.  Applying Lemma~\ref{lem:beta} shows that
\begin{equation}\label{eq:converge-beta-term-est}
\begin{split}
\left| \frac{1}{\mdel(\nu_k)} - \frac{1}{-\|\nu_k\|^2} \right|^2
& \le
\frac{1}{\mdel(\nu_k)^2}
+ \frac{2}{|\mdel(\nu_k)|\|\nu_k\|^2}
+ \frac{1}{\|\nu_k\|^4} \\
&\le
\frac{1}{\mdelbp(\nu_k)^2}
+ \frac{2}{|\mdelbp(\nu_k)|\|\nu_k\|^2}
+ \frac{1}{\|\nu_k\|^4}.
\end{split}
\end{equation}
None of the denominators in the final sum vanish for $k\ne 0$.  Thus, Theorem~\ref{thm:asymptotics} shows that there exist constants $C_1,C_2>0$ so that
\begin{equation}\label{eq:converge-beta-m-bound}
|\mdelbp(\nu_k)| \ge C_1(1+\|k\|^2)^{\frac{2-\epsilon}{2}}\quad\text{and}\quad
\|\nu_k\|^2 \ge C_2(1+\|k\|^2)
\end{equation}
for all $k\in\mathbb{Z}^n\setminus\{0\}$.  Combining~\eqref{eq:converge-beta-term},~\eqref{eq:converge-beta-term-est} and~\eqref{eq:converge-beta-m-bound} shows that for all $k\ne 0$ and all $\beta\in[\beta',n+2)$,
\begin{equation*}
(1+\|k\|^2)^{2-\epsilon}\left| \frac{1}{\mdel(\nu_k)} - \frac{1}{-\|\nu_k\|^2} \right|^2
\le \frac{1}{C_1^2}
+ \frac{2}{C_1C_2(1+\|k\|^2)^{\epsilon/2}}
+ \frac{1}{C_2^2(1+\|k\|^2)^{\epsilon}},
\end{equation*}
establishing the uniform bound needed to pass the limit in $\beta$ through the sum in~\eqref{eq:convergence-beta-key-est}.
\end{proof}

\section{Conclusion}
This work provides a new representation for the Fourier multipliers of a nonlocal Laplace operator in terms of  a hypergeometric function. This  representation allows for a unified approach for the analysis  of the multipliers and, consequently, the nonlocal Laplacian and associated nonlocal equations. In particular, the new representation of the multipliers  is exploited  to  characterize their asymptotic behavior and show the convergence of the multipliers $\mdel$ to the multipliers of the Laplacian for two types of limits; $\delta\rightarrow 0$ and $\beta\rightarrow n+2$. In addition, we show the convergence of $\Ldel\rightarrow \Delta$ as $\beta\rightarrow n+2$ and identify the limiting behavior of $\Ldel$ when $\beta\rightarrow -\infty$. Moreover, the hypergeometric representation of the multipliers allows for extending the definition of the nonlocal Laplacian to a pseudo-differential operator when $\beta>n+2$.  Future work will further discuss this pseudo-differential operator and its physical interpretation.

Our new approach for the analysis of the Fourier multipliers of the nonlocal Laplacian is applied in the periodic setting to show a regularity result for the nonlocal Poisson equation in $\Rn$ and to show the convergence of its solution to the solution of the classical Poisson equation. Computational aspects of nonlocal equations based on the Fourier multipliers approach and generalizing the results of this work to the peridynamics theory of nonlocal mechanics will be addressed in future works.

\bibliographystyle{acm}
\bibliography{refs}
\end{document}